\begin{document}
\newcommand{\teal}{\textcolor{teal}}
\newcommand{\red}{\textcolor{red}}
\newcommand{\blue}{\textcolor{blue}}
\newcommand{\orange}{\textcolor{orange}}
\newcommand{\h}{\textcolor{orange}}
\newcommand{\pink}{\textcolor{pink}}
\newcommand{\green}{\textcolor{green}}
\newcommand{\violet}{\textcolor{violet}}
\newcommand{\rf}{\textcolor{green}{ Refer }}
\newtheorem{theorem}{Theorem}[section]
\newtheorem{proposition}[theorem]{Proposition}
\newtheorem{lemma}[theorem]{Lemma}
\newtheorem{corollary}[theorem]{Corollary}
\newtheorem{definition}[theorem]{Definition}
\newtheorem{remark}[theorem]{Remark}
\newcommand{\tex}{\textstyle}
\numberwithin{equation}{section}
\newcommand{\ren}{\mathbb{R}^N}
\newcommand{\re}{\mathbb{R}}
\newcommand{\n}{\nabla}
\newcommand{\p}{\partial}
\newcommand{\iy}{\infty}
\newcommand{\pa}{\partial}
\newcommand{\fp}{\noindent}
\newcommand{\ms}{\medskip\vskip-.1cm}
\newcommand{\mpb}{\medskip}
\newcommand{\AAA}{{ A}}
\newcommand{\BB}{{ B}}
\newcommand{\CC}{{ C}}
\newcommand{\DD}{{ D}}
\newcommand{\EE}{{ E}}
\newcommand{\FF}{{ F}}
\newcommand{\GG}{{ G}}
\newcommand{\oo}{{\mathbf \omega}}
\newcommand{\Am}{{ A}_{2m}}
\newcommand{\CCC}{{\mathbf  C}}
\newcommand{\II}{{\mathrm{Im}}\,}
\newcommand{\RR}{{\mathrm{Re}}\,}
\newcommand{\eee}{{\mathrm  e}}
\newcommand{\LL}{L^2_\rho(\ren)}
\newcommand{\LLL}{L^2_{\rho^*}(\ren)}
\renewcommand{\a}{\alpha}
\renewcommand{\b}{\beta}
\newcommand{\g}{\gamma}
\newcommand{\G}{\Gamma}
\renewcommand{\d}{\delta}
\newcommand{\D}{\Delta}
\newcommand{\e}{\epsilon}
\newcommand{\var}{\varphi}
\newcommand{\lll}{\l}
\renewcommand{\l}{\lambda}
\renewcommand{\o}{\omega}
\renewcommand{\O}{\Omega}
\newcommand{\s}{\sigma}
\renewcommand{\t}{\tau}
\renewcommand{\th}{\theta}
\newcommand{\z}{\zeta}
\newcommand{\wx}{\widetilde x}
\newcommand{\wt}{\widetilde t}
\newcommand{\noi}{\noindent}
\newcommand{\uu}{{ u}}
\newcommand{\xx}{{ x}}
\newcommand{\yy}{{ y}}
\newcommand{\zz}{{ z}}
\newcommand{\aaa}{{ a}}
\newcommand{\cc}{{ c}}
\newcommand{\jj}{{ j}}
\newcommand{\ggg}{{ g}}
\newcommand{\UU}{{ U}}
\newcommand{\YY}{{ Y}}
\newcommand{\HH}{{ H}}
\newcommand{\GGG}{{ G}}
\newcommand{\VV}{{ V}}
\newcommand{\ww}{{ w}}
\newcommand{\vv}{{ v}}
\newcommand{\hh}{{ h}}
\newcommand{\di}{{\rm div}\,}
\newcommand{\ii}{{\rm i}\,}
\def\I{{\mathbf{I}}}
\newcommand{\inA}{\quad \mbox{in} \quad \ren \times \re_+}
\newcommand{\inB}{\quad \mbox{in} \quad}
\newcommand{\inC}{\quad \mbox{in} \quad \re \times \re_+}
\newcommand{\inD}{\quad \mbox{in} \quad \re}
\newcommand{\forA}{\quad \mbox{for} \quad}
\newcommand{\whereA}{,\quad \mbox{where} \quad}
\newcommand{\asA}{\quad \mbox{as} \quad}
\newcommand{\andA}{\quad \mbox{and} \quad}
\newcommand{\withA}{,\quad \mbox{with} \quad}
\newcommand{\orA}{,\quad \mbox{or} \quad}
\newcommand{\atA}{\quad \mbox{at} \quad}
\newcommand{\onA}{\quad \mbox{on} \quad}
\newcommand{\ef}{\eqref}
\newcommand{\mc}{\mathcal}
\newcommand{\mf}{\mathfrak}

\newcommand{\ssk}{\smallskip}
\newcommand{\LongA}{\quad \Longrightarrow \quad}
\def\com#1{\fbox{\parbox{6in}{\texttt{#1}}}}
\def\N{{\mathbb N}}
\def\A{{\cal A}}
\newcommand{\de}{\,d}
\newcommand{\eps}{\epsilon}
\newcommand{\be}{\begin{equation}}
\newcommand{\ee}{\end{equation}}
\newcommand{\spt}{{\mbox spt}}
\newcommand{\ind}{{\mbox ind}}
\newcommand{\supp}{{\mbox supp}}
\newcommand{\dip}{\displaystyle}
\newcommand{\prt}{\partial}
\renewcommand{\theequation}{\thesection.\arabic{equation}}
\renewcommand{\baselinestretch}{1.1}
\newcommand{\Dm}{(-\D)^m}
\title
{ On an n-dimensional fourth-order system under a parametric condition}

\author{Pablo \'Alvarez-Caudevilla, Cristina Br\"{a}ndle and Devashish Sonowal}

\address{Universidad Carlos III de Madrid,
Av. Universidad 30, 28911-Legan\'es, Spain} \email{pacaudev@math.uc3m.es}

\address{Universidad Carlos III de Madrid,
Av. Universidad 30, 28911-Legan\'es, Spain} \email{cbrandle@math.uc3m.es}

\address{Universidad Carlos III de Madrid,
Av. Universidad 30, 28911-Legan\'es, Spain} \email{dsonowal@math.uc3m.es}

\keywords{Coupled systems, higher order operators}

\thanks{This paper has been partially supported by Ministry of Economy and Competitiveness of Spain under research project PID2019-106122GB-I00}

\subjclass{35J70, 35J47, 35K57}

\date{\today}

\parskip5pt

\begin{abstract}
We establish the existence of positive solutions for a system of coupled fourth-order partial differential equations on a bounded domain $\Omega \subset \mathbb{R}^n$
\begin{align*}
    \left\{
         \begin{array}{l} \Delta^2u_1 +\beta_1 \Delta u_1-\alpha_1 u_1=f_1({ x},u_1,u_2),\\
         \Delta^2 u_2+\beta_2\Delta u_2-\alpha_2 u_2=f_2({ x},u_1,u_2), \end{array}  \quad \quad x\in\Omega,
    \right.
\end{align*}
subject to homogeneous Navier boundary conditions, where the functions $f_1,f_2 : \Omega\times [0,\infty)\times [0,\infty) \rightarrow [0,\infty)$ are continuous, and $\alpha_1,\alpha_2,\beta_1$ and $\beta_2$ are real parameters satisfying certain constraints related to the eigenvalues of the associated Laplace operator.
\end{abstract}

\maketitle

\section{Introduction}
 \label{S1}

Fourth-order nonlinear differential equations naturally appear in models concerning physical, biological, and chemical phenomena, such as, for instance, problems of elasticity, deformation of structures, or soil settlement, see, for example,
\cite{PelTro} and \cite{Gaz2} for the exposition of several models involving higher order operators.
As it is explained in \cite{PelTro}, typically in the literature we find fourth-order ordinary differential equations of the form
\begin{equation*}
\label{first}
\left\{
\begin{array}{l}
u^{(iv)}(x)=g(x,u(x), u''(x)), \qquad 0<x<1,\\
u(0)=u(1)=u''(0)=u''(1),
\end{array}\right.
\end{equation*}
under different conditions on the function $g$. Such kind of equations are used to model the deformations of an elastic beam in equilibrium state, whose two ends are simply supported~\cite{Gupta1988},~\cite{Gupta1998}. Due to its physics applications one normally looks for the existence of positive solutions. Thus, for the particular one-dimensional case there are several papers where
such an existence is analysed; see \cite{DelPino_Manas}, \cite{Gupta1988}, \cite{Gupta1998}, \cite{Li} for further details and references therein.
Although there are numerous references dealing with this type of one dimensional problems,
little is yet known about the behaviour of the solution in higher dimensions, both for a single equation or a system. Indeed, for one single equation the work \cite{DelPino_Manas} is probably
the only one mentioning higher dimension
equations. On the other hand,
 we would like to mention the work of Wang-Yang \cite{Wang-Yang} where a system of fourth order differential equations was analysed obtaining the existence of positive solutions, however in one dimension.

In this work we consider a bounded smooth domain $\Omega\subset \mathbb{R}^N$, with $N\geq 1$ and  we generalise the one dimensional system studied in \cite{Wang-Yang} to a system with  two coupled equations of the form
$$    \left\{
         \begin{array}{l} \Delta^2u_1 =g_1({ x},,u_1,u_2,\Delta u_1),\\
         \Delta^2 u_2=g_2({ x},u_1,u_2,\Delta u_2), \end{array}
    \right.
$$
for some functions $g_1$ and $g_2$. More precisely, and in relation to the previously analysed problems in 1D,
we will be interested in discussing the existence of positive solutions $(u_1,u_2)$ to the system
\begin{equation}\label{1.1_eq_principal}
    \left\{
         \begin{array}{l} \Delta^2u_1 +\beta_1 \Delta u_1-\alpha_1 u_1=f_1({ x},u_1,u_2),\\
         \Delta^2 u_2+\beta_2\Delta u_2-\alpha_2 u_2=f_2({ x},u_1,u_2), \end{array}  \quad \quad x\in\Omega,
    \right.
\end{equation}
under the homogeneous Navier boundary conditions
\begin{equation}\label{1.1_BC}
u_1=\Delta u_1=u_2=\Delta u_2=0  \quad \text{on }\partial \Omega.
\end{equation}
We will assume that the functions $f_1,f_2   \,:\, \Omega\times [0,\infty)\times [0,\infty) \rightarrow [0,\infty)$ are continuous and $\alpha_1,\alpha_2,\beta_1,\beta_2$ are real parameters. Our aim in this article is to show that under additional conditions on the parameters (the so-called non-resonance condition, see~\cite{DelPino_Manas})  of the system and on the growth conditions of $f_1$ and $f_2$, the system has at least one positive solution (see Section~\ref{proof_theorem}).

\noindent{\sc Outline of the paper:} In Section \ref{sect:preliminaries_elements} we show some crucial and important results and properties for Green's functions that will be essential in proving the existence of solutions for problem \eqref{1.1_eq_principal}--\eqref{1.1_BC}. Those existence results will be obtained through the application of general fixed point theory showed in Section \ref{Section_Existence}. The final Section \ref{proof_theorem} is devoted to the proof of the existence result.


\section{Preliminaries: the case of a single equation}
\label{sect:preliminaries_elements}


To show the existence of solution for system \eqref{1.1_eq_principal} under the homogeneous Navier boundary conditions \eqref{1.1_BC} it seems to be convenient to study the behaviour of the problem
\begin{equation}\label{eigenvalue_problem}
\left\{\begin{array}{ll}
\Delta^2u+\beta \Delta u-\alpha u = 0 & \quad\text{in } \Omega,\\  u=\Delta u=0 & \quad \text{on }\partial \Omega,
\end{array}\right.
\end{equation}
where $\alpha$ and $\beta$ are two real parameters. As a first step, observe that this fourth order equation~\eqref{eigenvalue_problem} can be rewritten as
\begin{equation}\label{product_Laplace}
 L_{\mu_1}L_{\mu_2}u:= (-\Delta -\mu_1)(-\Delta-\mu_2)u=0, \qquad \text{with\ } \beta =\mu_1+\mu_2\ \hbox{and}\  \alpha =-\mu_1\mu_2.
\end{equation}

The eigenvalues $\mu_1$ and $\mu_2$ are also the roots of the polynomial $P(\mu)= \mu^2-\beta \mu-\alpha$,  so that
$$\mu_{1}=\frac{\beta+\sqrt{\beta^2+4\alpha}}{2},\quad\mu_{2}=\frac{\beta-\sqrt{\beta^2+4\alpha}}{2}.$$
 We observe that these kinds of algebraic computations, as well as \eqref{product_Laplace}, are relatively standard for this type of problems; see several examples in the book of Peletier-Troy \cite{PelTro}.

It will be also useful to introduce $\lambda_k$, the eigenvalues of the Laplace operator $(-\Delta)$ under homogeneous Dirichlet boundary conditions in $\Omega$. For such an eigenvalue problem we actually have a family of infinitely many positive ordered eigenvalues, i.e.
$$0<\lambda_1\leq \lambda_2\leq \ldots \leq \lambda_k\leq \ldots$$
associated with a complete family of eigenfunctions $\{\phi_k\}_{k=1}^\infty$.
Now, for the eigenvalue problem \eqref{eigenvalue_problem} we can establish the following result, providing us with an existence condition for non-trivial solutions in terms of the parameters $\alpha$ and $\beta$.
\begin{proposition}
The eigenvalue problem~\eqref{eigenvalue_problem} has (at least) a non-trivial solution if and only if the pair $(\alpha,\beta)$ verifies
\begin{equation}\label{albe_condition}
\frac{\alpha}{\lambda_k^2}+\frac{\beta}{\lambda_k}=1,\quad \hbox{for some $k\in\mathbb{N}$.}
\end{equation}
\end{proposition}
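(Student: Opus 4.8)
The plan is to diagonalise the fourth-order operator in the orthonormal basis of Dirichlet eigenfunctions $\{\phi_k\}$ of $-\Delta$. The first observation is that each $\phi_k$ already satisfies the full set of Navier boundary conditions in \eqref{eigenvalue_problem}: since $-\Delta\phi_k=\lambda_k\phi_k$ and $\phi_k=0$ on $\partial\Omega$, we automatically get $\Delta\phi_k=-\lambda_k\phi_k=0$ on $\partial\Omega$. Applying $\Delta$ twice gives $\Delta^2\phi_k=\lambda_k^2\phi_k$, so that
\begin{equation*}
\Delta^2\phi_k+\beta\Delta\phi_k-\alpha\phi_k=\bigl(\lambda_k^2-\beta\lambda_k-\alpha\bigr)\phi_k .
\end{equation*}
In other words, $\phi_k$ is an eigenfunction of the operator $\Delta^2+\beta\Delta-\alpha$ with eigenvalue $\lambda_k^2-\beta\lambda_k-\alpha$, which is exactly $P(\lambda_k)$ for the polynomial $P$ introduced in \eqref{product_Laplace}.

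The sufficiency ($\Leftarrow$) is then immediate: if \eqref{albe_condition} holds for some index $k$, then multiplying by $\lambda_k^2>0$ gives $\lambda_k^2-\beta\lambda_k-\alpha=0$, so $\phi_k$ itself is a nontrivial solution of \eqref{eigenvalue_problem}. For the necessity ($\Rightarrow$) I would expand a given nontrivial solution $u$ in the complete orthonormal system $\{\phi_k\}$, writing $c_k=\int_\Omega u\,\phi_k$. Rather than differentiating the series term by term, I would test the equation against each $\phi_j$ and move the operators onto $\phi_j$ by integration by parts. The Navier conditions $u=\Delta u=0$ and $\phi_j=\Delta\phi_j=0$ on $\partial\Omega$ force all boundary terms to vanish, so that $\int_\Omega \Delta^2 u\,\phi_j=\int_\Omega u\,\Delta^2\phi_j=\lambda_j^2 c_j$ and $\int_\Omega \Delta u\,\phi_j=\int_\Omega u\,\Delta\phi_j=-\lambda_j c_j$. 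Hence testing \eqref{eigenvalue_problem} against $\phi_j$ yields $(\lambda_j^2-\beta\lambda_j-\alpha)c_j=0$ for every $j$. Since $u\neq 0$, completeness forces $c_j\neq 0$ for at least one index $j$, and for that index $\lambda_j^2-\beta\lambda_j-\alpha=0$; dividing by $\lambda_j^2$ gives precisely \eqref{albe_condition}.

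The only delicate point is the integration-by-parts step, i.e.\ the self-adjointness of $\Delta^2$ (and of $\Delta$) under the Navier boundary conditions. I would make this rigorous by fixing the functional setting $u\in H^4(\Omega)\cap\{u=\Delta u=0 \text{ on }\partial\Omega\}$ (or the corresponding weak space), in which the bilinear form $\int_\Omega \Delta u\,\Delta v$ is well defined and the double application of Green's identity producing $\int_\Omega u\,\Delta^2 v$ is justified with vanishing boundary contributions. As an alternative route one could exploit the factorisation $L_{\mu_1}L_{\mu_2}$ in \eqref{product_Laplace}, solving $L_{\mu_2}u=v$ and $L_{\mu_1}v=0$ successively through the Dirichlet Laplacian; however, the spectral argument above is the shortest and makes the role of condition \eqref{albe_condition} most transparent.
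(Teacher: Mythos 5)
Your proposal is correct, and its necessity argument takes a genuinely different route from the paper's. The paper exploits the factorisation \eqref{product_Laplace}: rewriting the equation as the cascade of Helmholtz problems \eqref{product_Laplace2}, it argues that if neither root $\mu_1$ nor $\mu_2$ of $P(\mu)=\mu^2-\beta\mu-\alpha$ coincides with a Dirichlet eigenvalue $\lambda_k$, then $v\equiv 0$ and in turn $u\equiv 0$; hence a nontrivial solution forces $\mu_i=\lambda_k$ for some $i$ and $k$, and substituting $\phi_k$ yields \eqref{albe_condition}. Sufficiency is handled identically in both proofs, by plugging in $\phi_k$ (your explicit check that $\Delta\phi_k=-\lambda_k\phi_k$ vanishes on $\partial\Omega$, so that $\phi_k$ satisfies the Navier conditions, is left tacit in the paper). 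Your diagonalisation instead tests the equation against each $\phi_j$ and moves the operators across by Green's identities --- legitimate because the conditions $u=\Delta u=0$ and $\phi_j=\Delta\phi_j=0$ on $\partial\Omega$ kill every boundary term --- obtaining $(\lambda_j^2-\beta\lambda_j-\alpha)c_j=0$ for all $j$, whence $P(\lambda_j)=0$ for any $j$ with $c_j\neq 0$. This buys two things: the argument never introduces the roots $\mu_{1,2}$, so the possibility that they are complex (which the paper must flag in a separate remark) needs no attention, all quantities remaining real; and the single analytic ingredient, self-adjointness of $\Delta$ and $\Delta^2$ under Navier conditions, is cleanly isolated and justified in the $H^4$ setting you specify. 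What the paper's factorisation buys, in turn, is not a shorter proof of this proposition but infrastructure: the same splitting into $L_{\mu_1}L_{\mu_2}$ underlies the representation \eqref{2.1} of solutions as a composition of two Green's functions, on which the fixed-point machinery of Sections \ref{Section_Existence} and \ref{proof_theorem} rests --- which is presumably why the authors argue through it here. You acknowledge this alternative route yourself at the end, so the two arguments are fully reconciled.
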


\begin{proof}
First of all observe that, instead of~\eqref{product_Laplace} (or~\eqref{eigenvalue_problem}), we might consider the equivalent pair of Helmholz equations
\begin{equation}
  \label{product_Laplace2} (-\Delta -\mu_2)u=v,\qquad (-\Delta-\mu_1)v=0.\end{equation}
If~\eqref{eigenvalue_problem} has a nontrivial solution, then it is clear that $\mu_1=\lambda_k$ or $\mu_2=\lambda_k$, for some $k\in \mathbb{N}$. Indeed, assume, by contradiction, that $\mu_1\neq\lambda_k$ and $\mu_2\neq\lambda_k$, for all $k\in \mathbb{N}$. In that case,  the second equation in~\eqref{product_Laplace2} has only the trivial solution and consequently, the solution for the first equation is also only the trivial one.
Moreover, if  $\phi_k$ denotes the eigenfunction for $(-\Delta)$ in $\Omega$ under Dirichlet boundary conditions associated with the eigenvalue $\lambda_k$, then, in any case, $\mu_1=\lambda_k$ or $\mu_2=\lambda_k$, the function $u=\phi_k$ is a solution to~\eqref{product_Laplace} and hence to~\eqref{eigenvalue_problem}. So that, finally, substituting $\phi_k$ into the fourth order equation  \eqref{eigenvalue_problem} we arrive at the equality \eqref{albe_condition}.

Conversely, if~\eqref{albe_condition} holds, it is straightforward to see that $u=\phi_k$ is a solution to~\eqref{eigenvalue_problem}.
\end{proof}

{\begin{remark}
  Observe that, as in~\cite{DelPino_Manas}, without further assumptions on $\alpha$ and $\beta$, the coefficients $\mu_1$ and $\mu_2$ may be complex.
\end{remark}}

In addition to the homogeneous problem~\eqref{eigenvalue_problem} consider also the inhomogeneous problem
\begin{equation}\label{eigenvalue_problem_h}
\left\{\begin{array}{ll}
\Delta^2u+\beta \Delta u-\alpha u = h({ x}) & \quad\text{in } \Omega,\\  u=\Delta u=0 & \quad \text{on }\partial \Omega,
\end{array}\right.
\end{equation}
for $h$ positive and continuous in $\Omega$. We will impose that $\alpha$ and $\beta$ verify
{
\begin{equation}
  \label{eq:second.condition.alpha.beta}\beta<2\lambda_1,\quad \text{and}\quad \beta^2\geq -4\alpha.
\end{equation}
These conditions imply that $\mu_1\geq \mu_2>-\lambda_1$. In particular, $\lambda_k+\mu> 0$.}

If we denote by  $G_i({ x},\tau)$, the Green's function of the linear boundary problem
$$(-\Delta-\mu_i)u=0\quad \text{in $\Omega$}$$ together with Dirichlet
homogeneous boundary conditions, then,   using again the expression~\eqref{product_Laplace}, we have that the solution of~\eqref{eigenvalue_problem_h}  is unique and it can be expressed as
\begin{align}\label{2.1}
u({ x})=\int_\Omega \int_\Omega G_{1}({ x},{ \tau})G_{2}({ \tau},{s})  h({s})\, d{s}\, d{ \tau}, \quad { x}\in {{\Omega}}.
\end{align}
Observe that, due to the Fredholm alternative,  the inhomogeneous problem has a unique solution, which is given by the Green's function~\eqref{2.1} if \eqref{albe_condition} fails. This indeed happens if $\mu_i\neq -\lambda_k$ which implies that the operators $(-\Delta-\mu_i)$ do not have the eigenvalue 0 and hence they are invertible.

We now prove some important properties of the Green's function that will play a crucial role in our analysis. Note that in general, for the problem \begin{equation}
\label{second_general}
L u=f\quad \hbox{in}\quad \Omega, \qquad u=0\quad \hbox{on}\quad \partial\Omega
\end{equation}
with $L$  a second order differential operator (self-adjoint to have some extra properties) we find  a solution to~\eqref{second_general} as
\begin{equation}
\label{Green_solution}
u({ x})=L^{-1}[f]({ x})=\int_\Omega G({ x},{ \tau}) f({\tau})d{\tau},
\end{equation}
in terms of the Green's function $G$. Thus, the existence of a Green's function is equivalent to that of a unique solution of~\eqref{second_general}.

\begin{lemma}\label{lemma2.1}
For all ${ x},{ \tau} \in \Omega$,  the Green's function $G$ associated with the differential operator $(-\Delta-\mu)$ {with Dirichlet boundary condition} satisfies the following properties:
\begin{enumerate}
\item[(i)] $G({ x},{  \tau})=G({  \tau},{ x})$ and $G({ x},{  \tau})>0$;
\item[(ii)] $G({ x},{  \tau}) \leq C \sqrt{G({  \tau},{  \tau})}$, where $C>0$ is a constant;
\item [(iii)] $G({ x},{  \tau}) \geq \delta \psi^2({ x})\sqrt{ G({  \tau},{ \tau})}$, where $\delta>0$ is a constant and $\psi$ is the $L_2$-normalized eigenfunction associated with the first eigenvalue, $\lambda_1$, of the linear operator $(-\Delta-\mu)$ with Dirichlet boundary condition.
\end{enumerate}
\end{lemma}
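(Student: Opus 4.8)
The plan is to base the entire argument on the spectral representation of $G$. Under the standing assumptions the operator $\mathcal{L}:=-\Delta-\mu$ is self-adjoint on $L^2(\Omega)$ with Dirichlet conditions and has strictly positive, discrete spectrum; write its eigenvalues as $\nu_k>0$ and the associated $L^2$-normalized eigenfunctions as $\phi_k$, so that $\psi=\phi_1$. The Green's function is then the symmetric kernel
\begin{equation*}
G(x,\tau)=\sum_{k=1}^{\infty}\frac{\phi_k(x)\,\phi_k(\tau)}{\nu_k},
\end{equation*}
which makes the symmetry in (i) immediate. For the positivity I would argue by the maximum principle: since $\nu_1>0$, $\mathcal{L}$ obeys the weak and strong maximum principles, so $\mathcal{L}^{-1}$ maps nonnegative data to functions that are positive in the interior; applied to the point source at $\tau$ this gives $G>0$ on $\Omega\times\Omega$, with the Hopf lemma governing the boundary decay.

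For (ii) the point is that the displayed series exhibits $G$ as a positive-definite (Mercer) kernel. Applying the Cauchy--Schwarz inequality to the sequences $(\phi_k(x)/\sqrt{\nu_k})_k$ and $(\phi_k(\tau)/\sqrt{\nu_k})_k$ yields
\begin{equation*}
G(x,\tau)\le\Big(\sum_{k}\frac{\phi_k(x)^2}{\nu_k}\Big)^{1/2}\Big(\sum_{k}\frac{\phi_k(\tau)^2}{\nu_k}\Big)^{1/2}=\sqrt{G(x,x)}\,\sqrt{G(\tau,\tau)}.
\end{equation*}
It then suffices to bound $\sqrt{G(x,x)}\le C$, which holds because the diagonal $x\mapsto G(x,x)$ is continuous on $\overline{\Omega}$ and vanishes on $\partial\Omega$; this produces (ii) with $C=\sup_{\overline{\Omega}}\sqrt{G(x,x)}$.

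The lower bound (iii) is where I expect the genuine work. The natural route is to first compare the off-diagonal kernel with the principal eigenfunction, establishing $G(x,\tau)\ge c\,\psi(x)\,m(\tau)$ for a suitable positive weight $m(\tau)$: this uses that $x\mapsto G(x,\tau)$ is a positive $\mathcal{L}$-solution vanishing on $\partial\Omega$, together with a boundary Harnack/Hopf estimate and the fact that $\psi(x)$ is comparable to $\mathrm{dist}(x,\partial\Omega)$. Passing from $\psi(x)$ to $\psi^2(x)$ on the $x$-side is then harmless, since $\psi^2(x)\le\|\psi\|_\infty\,\psi(x)$ only weakens the bound; so the whole difficulty is concentrated in the $\tau$-weight, that is, in showing $m(\tau)\ge c'\sqrt{G(\tau,\tau)}$ uniformly.

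This is precisely the main obstacle, and it is a boundary phenomenon: as $\tau\to\partial\Omega$ the quantities $\psi(\tau)$, $m(\tau)$ and $\sqrt{G(\tau,\tau)}$ degenerate at a priori different rates, so one must prove that the constant $\delta$ survives uniformly up to $\partial\Omega$. I would therefore invest most of the effort in sharp two-sided boundary estimates for both $G(x,\tau)$ and the diagonal $G(\tau,\tau)$ in terms of $\mathrm{dist}(\cdot,\partial\Omega)$, since controlling the diagonal from above (already used in (ii)) and from below is what fixes the admissible profile in (iii). It is worth noting that these diagonal quantities are genuinely finite and explicit in dimension one, which is the setting in which the claimed form of (ii)--(iii) is cleanest.
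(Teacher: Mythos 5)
Your treatment of (i) and (ii) coincides with the paper's own proof: the paper likewise takes the bilinear (Mercer) expansion $G(x,\tau)=\sum_{k}\phi_k(x)\phi_k(\tau)/(\lambda_k+\mu)$, obtains (ii) from Cauchy--Schwarz together with boundedness of $G$, and obtains positivity from positivity of the spectrum (citing Kreith rather than running the maximum principle, but this is the same fact). Your parenthetical caveat that the diagonal $G(x,x)$ is finite only in dimension one is well taken --- the paper silently invokes ``the boundedness of $G$,'' which likewise holds only for $n=1$ --- so on (i)--(ii) you match the paper, caveat included.

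Part (iii), however, you have not proved: the whole burden is deferred to the unestablished weight estimate $m(\tau)\geq c'\sqrt{G(\tau,\tau)}$, and that intermediate step is in fact unattainable, so the factorized strategy $G(x,\tau)\geq c\,\psi(x)\,m(\tau)$ cannot be completed. Concretely, take the admissible model case $n=1$, $\mu=0$, $\Omega=(0,1)$, where $G(x,\tau)=\min(x,\tau)\left(1-\max(x,\tau)\right)$: fixing an interior $x$ and letting $\tau\to 1$ gives $G(x,\tau)\asymp(1-\tau)$ while $\sqrt{G(\tau,\tau)}\asymp\sqrt{1-\tau}$, so any weight $m$ compatible with $G(x,\tau)\geq c\,\psi(x)m(\tau)$ must satisfy $m(\tau)\lesssim(1-\tau)$ and can never dominate $\sqrt{G(\tau,\tau)}$ near the boundary. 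Your own worry in the last paragraph --- that the rates degenerate differently as $\tau\to\partial\Omega$ --- is thus not merely the hard part but fatal to the factorized route (and your computation even puts pressure on the stated inequality itself in that regime). The paper closes, or at least attempts to close, this step with a different idea absent from your outline: it couples the two variables through the eigenfunction identity $\psi(x)=\lambda_1\int_\Omega G(x,y)\psi(y)\,dy$ combined with the sup-norm bound $\|\psi\|_\infty\leq\lambda_1^{n/4}$ of Banuelos. On a set where $G(x,\cdot)$ is uniformly of size $\epsilon$, this yields $\psi(x)\leq C\epsilon$, hence $\psi^2(x)\leq C^2\epsilon^2$ in the denominator, so the ratio $G(x,\tau)/\bigl(\psi^2(x)\sqrt{G(\tau,\tau)}\bigr)$ grows like $1/\epsilon$ instead of degenerating; the quadratic gain from the factor $\psi^2$ is exactly what a one-variable-at-a-time argument cannot see. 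Note, though, that this device only controls smallness of the numerator produced through the $x$-variable (the paper's set $A_\epsilon$ requires $G(x,\tau)<\epsilon$ for all $\tau$), whereas the regime your example probes --- $x$ interior, $\tau\to\partial\Omega$ --- is precisely where both your plan and, arguably, the paper's own argument require more care.
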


\begin{remark}
  Since $G(x,\tau)=0$ for $x\in\partial \Omega$, we get that  (ii) and (iii) above,  are trivially satisfied for $x\in\partial \Omega$.
\end{remark}

\begin{proof}
First, observe that  $G$ is the Green's function associated with the self-adjoint operator $(-\Delta-\mu)$, whose eigenvalues and eigenfunctions  are just $\lambda_k+\mu$ and $\phi_k$. Moreover, \eqref{eq:second.condition.alpha.beta} implies that $\lambda_k+\mu_i>0$. Therefore, the symmetry property in (i) and the positivity of $G$  are straightforward, see for instance~\cite{Kreith}.

To prove the following items we use the so-called bilinear representation for the Green's functions of Helmholtz type equations (see \cite{Duffy} for further details on the original paper of Sommerfield \cite{Somm}). In particular, we have
\begin{equation}
\label{Green_function}
 G({ x},{ \tau}) =\sum_{k=1}^\infty \frac{\phi_k({ x}) \phi_k(  \tau)}{\lambda_k+\mu}.
\end{equation}
The boundedness of $G$ and the Cauchy-Schwartz inequality applied to  \eqref{Green_function} yields estimate (ii).

Finally, to obtain estimate (iii), it suffices to show that, there exists a constant $\delta$ such that
\begin{equation}
  \label{eq:infimum}
\inf_{x,\tau \in \Omega} \frac{G(x,\tau)}{\psi^2(x)\sqrt{G(\tau,\tau)}}\geq\delta>0.
\end{equation}
To this aim, we observe first that, from (i) we have that $G$ is positive in $\Omega$ and moreover,  $G$ is bounded  in $\overline{\Omega}$ by continuity. Therefore, $\psi^2(x)\sqrt{G(\tau,\tau)}$ is uniformly bounded in $\Omega$. Hence, the infimum in~\eqref{eq:infimum} could only be zero if the numerator approaches zero. However, we will show that this infimum cannot be achieved when the numerator $G(x,\tau)$ approaches zero.

 Indeed, for any $\epsilon>0$, consider the set $A_\epsilon=\{x\in \overline{\Omega}\;|\;0<\frac{\epsilon}{2}<G(x,\tau)<\epsilon, \text{ for all } \tau \in \Omega\}$.
   Since $(-\Delta-\mu)\psi=\lambda_1\psi$, with homogeneous Dirichlet boundary data, using the representation formula, we obtain for any $x\in A_\epsilon$,
\begin{equation}\label{estimate_psi}
    \psi(x)=\lambda_1\int_\Omega G(x,y)\psi(y)dy\leq \epsilon\lambda_1^{1+\frac{n}{4}}|\Omega|,
\end{equation}
where the inequality follows from the estimate $|\psi(y)|\leq \lambda_1^{\frac{n}{4}}$, for all $y\in \Omega$~\cite{Banuelos}. Finally, using~\eqref{estimate_psi}, for any $x\in A_\epsilon$ we obtain
$$
    \frac{G(x,\tau)}{\psi^2(x)\sqrt{G(\tau,\tau)}}\geq \left(2\epsilon\lambda_1^{2+\frac{n}{2}}\max_\Omega \sqrt{G(\tau,\tau)}\right)^{-1}.
$$
Hence, as $\epsilon$ tends to zero,
$$\frac{G(x,\tau)}{\psi^2(x)\sqrt{G(\tau,\tau)}}\to\infty .
$$
Therefore, the infimum~\eqref{eq:infimum} is attained at a point $(x,\tau)$ such that ${G(x,\tau)}$ is positive, and due to the continuity and boundedness of $G$,~\eqref{eq:infimum}, and hence (iii), follows.

\end{proof}

Finally, we include an  estimation for the solution of equation \eqref{eigenvalue_problem_h}. {Its proof is similar to the one performed in \cite{Li} and we omit it here}. Here, and in the sequel  we will denote the maximum norm in $C(\overline{\Omega})$ by  $\|u\|=\max_{\overline{\Omega}} u({ x})$.
\begin{lemma}
\label{lemma_bound_u}
The solution $u$ of the boundary value problem \eqref{eigenvalue_problem_h} satisfies {for all $x\in \Omega$}
\begin{equation}
\label{estimation_Li}
u({x})\geq \frac{\delta_1 \delta_2 C_0 \psi^2_1(x) }{C_1C_2|\Omega|{\sqrt{M_1}}}\|u\|,
\end{equation}
with
$$
C_0=\int_\Omega \sqrt{G_1({ x}, {x})} \psi^2_2({ x}) \,d{x}\text{\quad and\quad}  M_1=\max_{\overline\Omega} G_{1} ({x},{ x}),
$$
where $\delta_i$ and $C_i$, with $i=1,2$ are the quantities $\delta$ and $C$ (respectively) appearing in Lemma~{\rm\ref{lemma2.1}} relatively to Green's function $G_i$, $\psi_i$ are the 
$L_2$-normalized eigenfunctions associated with the first eigenvalue of the linear operator $(-\Delta-\mu_i)$ with Dirichlet boundary condition.
\end{lemma}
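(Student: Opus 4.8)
The plan is to extract the lower bound directly from the explicit representation \eqref{2.1} of the solution,
\[
u(x)=\int_\Omega\int_\Omega G_1(x,\tau)\,G_2(\tau,s)\,h(s)\,ds\,d\tau,
\]
by playing the two estimates of Lemma~\ref{lemma2.1} against one another. The crucial observation is that one and the same ``reservoir'' integral, namely $\int_\Omega\sqrt{G_2(s,s)}\,h(s)\,ds$, controls $u(x)$ from below and $\|u\|$ from above; dividing one bound by the other makes this quantity (which depends on the unspecified datum $h$) cancel, leaving a bound involving only $u$ itself.

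First I would derive the lower bound on $u(x)$. Applying estimate (iii) of Lemma~\ref{lemma2.1} to $G_1$, in the form $G_1(x,\tau)\geq\delta_1\psi_1^2(x)\sqrt{G_1(\tau,\tau)}$, and then to $G_2$, in the form $G_2(\tau,s)\geq\delta_2\psi_2^2(\tau)\sqrt{G_2(s,s)}$, and using $h\geq0$, the double integral factorises: the $\tau$-integration produces exactly $C_0=\int_\Omega\sqrt{G_1(\tau,\tau)}\,\psi_2^2(\tau)\,d\tau$, and one is left with
\[
u(x)\geq\delta_1\delta_2\,C_0\,\psi_1^2(x)\int_\Omega\sqrt{G_2(s,s)}\,h(s)\,ds.
\]

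Next I would bound $\|u\|$ from above by means of estimate (ii). Applying $G_1(x,\tau)\leq C_1\sqrt{G_1(\tau,\tau)}$ and $G_2(\tau,s)\leq C_2\sqrt{G_2(s,s)}$, and then replacing $\sqrt{G_1(\tau,\tau)}$ by its maximum $\sqrt{M_1}$ over $\overline{\Omega}$, the $\tau$-integration contributes the factor $|\Omega|\sqrt{M_1}$, so that for every $x$,
\[
u(x)\leq C_1C_2\,|\Omega|\sqrt{M_1}\int_\Omega\sqrt{G_2(s,s)}\,h(s)\,ds;
\]
taking the maximum over $\overline{\Omega}$ gives the same bound for $\|u\|$, whence $\int_\Omega\sqrt{G_2(s,s)}\,h(s)\,ds\geq \|u\|\,(C_1C_2|\Omega|\sqrt{M_1})^{-1}$. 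Substituting this into the previous display yields \eqref{estimation_Li} at once.

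The computation is essentially routine; the only genuine decision --- and the single place where a careless application of the lemma would spoil the constants --- is the asymmetric use of the two estimates. For the lower bound both Green's functions are estimated through (iii), which is precisely what retains the weight $\psi_2^2(\tau)$ and reproduces $C_0$, whereas for the upper bound both are estimated through (ii) and the $\tau$-dependence of $G_1$ is then discarded through the crude bound $\sqrt{M_1}$. One should also record that $C_0$, $M_1$, $\delta_i$ and $C_i$ are finite and strictly positive, which follows from the continuity and strict positivity of the Green's functions on $\overline{\Omega}$ established in Lemma~\ref{lemma2.1}.
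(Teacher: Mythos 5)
Your proof is correct and is essentially the argument the paper intends: the paper omits the proof of this lemma (deferring to \cite{Li}), but the identical two-sided sandwich---Lemma~\ref{lemma2.1}(iii) on both kernels below, Lemma~\ref{lemma2.1}(ii) plus the crude bound $\sqrt{G_1(\tau,\tau)}\leq\sqrt{M_1}$ above, with the common factor $\int_\Omega\sqrt{G_2(s,s)}\,h(s)\,ds$ cancelling---is carried out verbatim in the paper's own proof of Lemma~\ref{lemma4.1} for the operators $T_j$. Your closing remark on the positivity and finiteness of $C_0$, $M_1$, $\delta_i$, $C_i$ is a sensible addition that the paper leaves implicit.
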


\section{Fixed Point Theory}
\label{Section_Existence}


From the previous section it can be seen that we actually can work with integral equations to prove the existence of solutions for problem \eqref{1.1_eq_principal} (based on the Green's functions). In fact, first of all we observe that we can adapt the expression~\eqref{2.1} to the solution of~\eqref{1.1_eq_principal} and write
$$u_j({ x})=\int_\Omega \int_\Omega G_{1,j}({ x},{ \tau})G_{2,j}({ \tau},{ s})  f_{j}({ s}, u_{1}({ s}),u_{2}({s})) d{ s}\, d{ \tau}, \quad { x}\in \overline{\Omega},$$
with $j=1,2$.
In view of this expression,  we shall mainly discuss the existence results for~\eqref{1.1_eq_principal} by using the fixed point index theory. Thus, we define the following mappings:
\begin{equation}\label{3.8}
\begin{split}
    T_{j}(u_1, u_2)({ x})&=\int_\Omega \int_\Omega G_{1,j}({ x},{ \tau})G_{2,j}({ \tau},{ s})  f_{j}({ s}, u_{1}({ s}),u_{2}({s}))  d{ s}\, d{ \tau}, \\
T(u_1, u_2)({ x})&=\left(T_{1}(u_1, u_2)({ x}), T_{2}(u_1, u_2)({ x})\right),
\end{split}
\end{equation}
for all ${ x} \in {\Omega}$, $(u_1, u_2) \in C(\overline{\Omega}) \times C(\overline{\Omega})$. Observe that the existence of non-trivial solutions for system~\eqref{1.1_eq_principal}
is equivalent to the existence of a nontrivial fixed point of $T$. Therefore, we just need to find the nontrivial fixed point of $T$ to establish the existence of non-trivial solutions for~\eqref{1.1_eq_principal}.

We introduce some results of  fixed point index theory which will be play a key role in the subsequent analysis, see \cite{Amann}, \cite{GuoLak} and \cite{Lloyd}.

\begin{definition}
[{\cite[Chapter 1]{GuoLak}}] Let $(E,\|\ \|)$ be a real Banach space. A non-empty, closed, convex set $K\subset E$ is called a \textit{cone} if the following conditions are met:
\begin{enumerate}
    \item[(i)] If ${ v}\in K$, and $a\geq 0$ then $a{ v}\in K$;
    \item[(ii)] If ${ v}\in K$ and $-{ v}\in K$, then ${ v}={ 0}$.
\end{enumerate}
\end{definition}
In our setting we consider the Banach space $E:=C(\overline{\Omega}) \times C(\overline{\Omega})$ endowed with the norm
$${ \|(u_1, u_2)\|=\|u_1\|+\|u_2\|}.$$
Recall that, by abuse of notation, we have denoted the maximum norm of $C(\overline{\Omega})$ also by $\|\cdot\|$.
Having this  in mind we define for the closed subset $\Omega_0$ of $\Omega$,  the set
$$
\mathcal{P}=\{(u_1, u_2) \in E: (u_{1}(x)+u_2(x)) \geq \sigma\|(u_1,u_2)\|,\; \text{ for all } \:{ x} \in{\Omega}_0\}, \quad \text{ with } \sigma=\min\{\sigma_1,\sigma_2\}
$$
where $$\sigma_{j}=\frac{\delta_{1,j}\delta_{2,j}m_{1,j}C_{0,j}}{C_{1,j}C_{2,j}\sqrt{M_{1,j}}|\Omega|}, $$ with
$\delta_{i,j}, C_{0,j}$, $C_{i,j}$, and $M_{1,j}$ as the corresponding quantities coming from Lemma\;\ref{lemma2.1} for $u_j$, and $m_{1,j}=\min_{\Omega_0}\psi^2_{1,j}(x)$, $i,j=1,2$. Note that $0<\sigma_j<\infty$ for all ${{x}}\in \Omega_0$.

\begin{lemma}\label{lemma2.3}
The set $\mathcal{P}$ is a nonempty, convex, and closed subset of $E$. Moreover, $\mathcal{P}$ is a cone of~$E$.
\end{lemma}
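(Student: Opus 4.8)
The plan is to verify the four required properties---nonemptiness, convexity, closedness, and the two cone axioms---directly from the definition of $\mathcal{P}$, since each one reduces to an elementary manipulation of the defining inequality $u_1(x)+u_2(x)\ge \sigma\|(u_1,u_2)\|$ on $\Omega_0$. The only structural facts I will invoke are that $\sigma=\min\{\sigma_1,\sigma_2\}>0$ (which follows from the strict positivity of the constants $\delta_{i,j},C_{0,j},C_{i,j},M_{1,j},m_{1,j}$ coming from Lemma~\ref{lemma2.1}, as already noted when $\mathcal{P}$ was introduced), that the norm on $E$ is the sum of the maximum norms of the two components, and that convergence in $E$ means uniform convergence of each component.

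For nonemptiness I would simply observe that $(0,0)\in\mathcal{P}$, since both sides of the defining inequality then vanish. For closedness I would take a sequence $(u_1^{(n)},u_2^{(n)})\in\mathcal{P}$ converging to $(u_1,u_2)$ in $E$: uniform convergence yields pointwise convergence $u_j^{(n)}(x)\to u_j(x)$ at every $x\in\Omega_0$, while continuity of the norm gives $\|(u_1^{(n)},u_2^{(n)})\|\to\|(u_1,u_2)\|$; passing to the limit in the inequality $u_1^{(n)}(x)+u_2^{(n)}(x)\ge\sigma\|(u_1^{(n)},u_2^{(n)})\|$ preserves it, so the limit lies in $\mathcal{P}$.

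The two cone axioms are equally direct. For the scaling axiom, multiplying the defining inequality by $a\ge 0$ and using $\|(au_1,au_2)\|=a\|(u_1,u_2)\|$ shows $a(u_1,u_2)\in\mathcal{P}$. For the second axiom, if both $(u_1,u_2)$ and $-(u_1,u_2)$ lie in $\mathcal{P}$, then adding the inequalities $u_1(x)+u_2(x)\ge\sigma\|(u_1,u_2)\|$ and $-u_1(x)-u_2(x)\ge\sigma\|(u_1,u_2)\|$ on $\Omega_0$ gives $0\ge 2\sigma\|(u_1,u_2)\|$; since $\sigma>0$ this forces $\|(u_1,u_2)\|=0$, i.e. $(u_1,u_2)=(0,0)$. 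It is precisely here, and in the last step of the convexity argument, that strict positivity of $\sigma$ is essential.

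For convexity, given $(u_1,u_2),(v_1,v_2)\in\mathcal{P}$ and $t\in[0,1]$, I would bound the convex combination pointwise on $\Omega_0$ by
\begin{align*}
t\bigl(u_1+u_2\bigr)(x)+(1-t)\bigl(v_1+v_2\bigr)(x)&\ge \sigma\bigl(t\|(u_1,u_2)\|+(1-t)\|(v_1,v_2)\|\bigr)\\
&\ge \sigma\,\bigl\|t(u_1,u_2)+(1-t)(v_1,v_2)\bigr\|,
\end{align*}
where the first line multiplies each defining inequality by the nonnegative factors $t$ and $1-t$, and the second applies the triangle inequality (together with $\sigma>0$). The left-hand side is exactly the sum of the components of $t(u_1,u_2)+(1-t)(v_1,v_2)$ at $x$, so this element belongs to $\mathcal{P}$. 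I do not expect a genuine obstacle here: the whole lemma is a routine consequence of the definition and of $\sigma>0$. The single point that needs care is the direction of the inequality in the convexity estimate---the triangle inequality must be applied to the right-hand (smaller) quantity, not the left, so that no bound is lost---which is the only place where a sign or inequality slip could occur.
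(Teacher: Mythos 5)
Your proof is correct and takes essentially the same approach as the paper: direct verification of each property from the definition of $\mathcal{P}$, including the identical convexity chain (scale the defining inequalities by $t$ and $1-t$, then apply the triangle inequality to the right-hand side) and the same addition argument using $\sigma>0$ for the second cone axiom. Your closedness step via uniform convergence and continuity of the norm merely spells out in more detail what the paper dispatches with a one-line appeal to continuity.
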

\begin{proof}
It is clear that $\mathcal{P}$ is nonempty as ${ 0}\in \mathcal{P}$. Also, thanks to the continuity of $u_1$ and $u_2$ it follows that $\mathcal{P}$ is closed.
To see that it is convex, observe that if $(u_1,u_2), (v_1,v_2)\in \mathcal{P}$ for $t\in(0,1)$
$$t( u_1+u_2)+(1-t)(v_1+v_2)\geq \sigma(t(\|u_1\|+\|u_2\|)+(1-t)(\|v_1\|+\|v_2\|))
\geq\sigma(\|tu_1+(1-t)v_1\|+\|tu_2+(1-t)v_2\|),$$
which implies $t(u_1,u_2)+(1-t)(v_1,v_2)\in\mathcal{P}$.

    Finally we  prove that $\mathcal{P}$ is a cone of the Banach space $E$. Indeed, it is straightforward to check that if $(u_1,u_2)\in \mathcal{P}$ and $a\geq 0$, then  $(a u_1,a u_2)\in \mathcal{P}.$
    Moreover, assume that $(u_1,u_2)$ and $(-u_1,-u_2)$ belong to $\mathcal{P}.$
    Then,
    $
    0\geq 2\sigma (\|u_1\|+\|u_2\|)
    $.
    Since $\sigma>0$ inside $\mathcal{P}$ we conclude that $(u_1,u_2)=(0,0).$
\end{proof}

We now present a lemma that outlines some properties of the mapping $T$.
\begin{lemma}\label{lemma4.1}
The mapping $T: \mathcal{P} \rightarrow \mathcal{P}$, defined by \eqref{3.8}, is completely continuous and  $T(\mathcal{P}) \subset \mathcal{P}$.
\end{lemma}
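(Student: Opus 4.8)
The plan is to verify the two assertions separately, treating the invariance $T(\mathcal{P})\subset\mathcal{P}$ first, as this is where the constants $\sigma_j$ were engineered.

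Fix $(u_1,u_2)\in\mathcal{P}$ and set $w_j:=T_j(u_1,u_2)$. Since each $f_j$ is nonnegative and, by Lemma~\ref{lemma2.1}(i), each Green's function $G_{i,j}$ is strictly positive, the functions $w_j$ are nonnegative and continuous on $\overline{\Omega}$. The key observation is that $w_j$ is precisely the unique solution of the inhomogeneous problem~\eqref{eigenvalue_problem_h} with right-hand side $h(s)=f_j(s,u_1(s),u_2(s))$, as follows by comparing~\eqref{3.8} with the representation~\eqref{2.1}. Hence Lemma~\ref{lemma_bound_u} applies to each $w_j$ and yields, for all $x\in\Omega$,
$$
w_j(x)\geq \frac{\delta_{1,j}\delta_{2,j}C_{0,j}}{C_{1,j}C_{2,j}|\Omega|\sqrt{M_{1,j}}}\,\psi_{1,j}^2(x)\,\|w_j\|.
$$
Restricting to $x\in\Omega_0$ and bounding $\psi_{1,j}^2(x)\geq m_{1,j}$ there, the right-hand side becomes $\sigma_j\|w_j\|$, with $\sigma_j$ exactly the constant defined before the statement. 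Summing over $j=1,2$ and using $\sigma_j\geq\sigma=\min\{\sigma_1,\sigma_2\}$ gives $w_1(x)+w_2(x)\geq\sigma(\|w_1\|+\|w_2\|)=\sigma\|T(u_1,u_2)\|$ for every $x\in\Omega_0$, which is exactly the defining condition of $\mathcal{P}$.

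For complete continuity I would invoke the Arzel\`a--Ascoli theorem. Continuity of $T$ follows from that of $f_1,f_2$: if $(u_1^n,u_2^n)\to(u_1,u_2)$ in $E$, then by uniform continuity of $f_j$ on the compact set $\overline{\Omega}\times[0,R]\times[0,R]$ (with $R$ bounding the convergent sequence) one has $f_j(s,u_1^n(s),u_2^n(s))\to f_j(s,u_1(s),u_2(s))$ uniformly in $s$, and since the kernel $G_{1,j}(x,\tau)G_{2,j}(\tau,s)$ is fixed and integrable (bounded by Lemma~\ref{lemma2.1}), passing to the limit under the integral yields $T(u_1^n,u_2^n)\to T(u_1,u_2)$ uniformly. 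For compactness, let $B\subset\mathcal{P}$ be bounded; then $f_j$ is bounded on the compact range determined by $B$, so $T(B)$ is uniformly bounded. Equicontinuity of $\{w_j:(u_1,u_2)\in B\}$ follows from the uniform continuity of $x\mapsto G_{1,j}(x,\tau)$ on $\overline{\Omega}\times\overline{\Omega}$: for $|x-x'|$ small, $|G_{1,j}(x,\tau)-G_{1,j}(x',\tau)|$ is uniformly small in $\tau$, and integrating against the uniformly bounded remainder transfers this to $|w_j(x)-w_j(x')|$. Arzel\`a--Ascoli then gives relative compactness of $T(B)$ in $E$.

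The step I expect to be most delicate is the equicontinuity estimate: it requires a uniform-in-$\tau$ modulus of continuity for the outer factor $G_{1,j}(\cdot,\tau)$, and the double integration over $\tau$ and $s$ must preserve this uniform control. This rests entirely on the boundedness and continuity of the Green's kernels recorded in Lemma~\ref{lemma2.1}; were these kernels merely bounded rather than continuous, the equicontinuity would instead have to be recovered by a careful splitting of the integration domain near the diagonal.
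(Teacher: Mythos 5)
Your proof is correct, but you route the cone-invariance step differently from the paper. The paper re-derives the lower bound inline: starting from \eqref{3.8} it applies Lemma~\ref{lemma2.1}(ii)--(iii) twice to get first the upper bound \eqref{above} on $\|T_j(u_1,u_2)\|$ and then the pointwise lower bound $T_j(u_1,u_2)(x)\geq \delta_{1,j}\delta_{2,j}\psi^2_{1,j}(x)C_{0,j}\int_\Omega\sqrt{G_{2,j}(s,s)}\,f_j\,ds$, combining the two to obtain $T_j(u_1,u_2)(x)\geq\sigma_j\|T_j(u_1,u_2)\|$ on $\Omega_0$. You instead observe that $w_j=T_j(u_1,u_2)$ solves the inhomogeneous problem \eqref{eigenvalue_problem_h} with $h=f_j(\cdot,u_1,u_2)$ (comparing \eqref{3.8} with \eqref{2.1}) and invoke Lemma~\ref{lemma_bound_u} directly; since \eqref{estimation_Li} carries exactly the constant $\sigma_j/m_{1,j}$, this is a legitimate and cleaner shortcut --- indeed the paper's inline chain is essentially the omitted proof of Lemma~\ref{lemma_bound_u}, so the paper duplicates work that you avoid. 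Two small remarks on what each choice buys. First, Lemma~\ref{lemma_bound_u} is stated for $h$ \emph{positive} and continuous, whereas $f_j(\cdot,u_1,u_2)$ is only nonnegative; you should note that the estimate survives for $h\geq 0$ (trivially when $w_j\equiv 0$, and by the same integral chain otherwise), which the paper sidesteps by working with $f_j\geq 0$ directly. Second, the paper's inline derivation leaves behind the intermediate display $T_j(u_1,u_2)(x)\geq \delta_{1,j}\delta_{2,j}\psi^2_{1,j}(x)C_{0,j}\int_\Omega\sqrt{G_{2,j}(s,s)}\,f_j\,ds$, which is reused verbatim in the proof of Theorem~\ref{thm:main} to show $\inf_{\partial\mathcal{P}_R}\|T(u_1,u_2)\|>0$; with your packaging that estimate would have to be re-extracted there. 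On complete continuity you are actually more thorough than the paper, which disposes of it in one sentence citing continuity of $f_j$ and $G_{i,j}$ plus Ascoli--Arzel\`a; your explicit uniform-continuity and equicontinuity arguments are valid within the paper's framework, and your closing caveat is well placed: everything hinges on the continuity and boundedness of the kernels asserted in Lemma~\ref{lemma2.1}, which is exactly the hypothesis the paper itself relies on.
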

\begin{proof}
Let $(u_1,u_2)\in \mathcal{P}$. For $j=1,2$, using Lemma~\ref{lemma2.1} we get
\begin{equation}
\begin{split}
T_{j}(u_1, u_2)({ x}) &=\int_\Omega \int_\Omega G_{1,j}({ x},{ \tau})G_{2,j}({ \tau},{ s})  f_{j}({ s}, u_{1}({ s}),u_{2}({ s}))  d{ s}\, d{ \tau}\\
&\leq C_{1,j}C_{2,j}\sqrt{M_{1,j}} |\Omega|\int_\Omega \sqrt{G_{2,j}(s,s)}f_j({ s}, u_{1}({ s}),u_{2}({ s}))ds
\end{split}
\end{equation} for all $x\in {\Omega}$, and hence
\begin{equation}\label{above}
\begin{split}
\|T_{j}(u_1, u_2)\|
&\leq C_{1,j}C_{2,j}\sqrt{M_{1,j}} |\Omega|\int_\Omega \sqrt{G_{2,j}(s,s)}f_j({ s}, u_{1}({ s}),u_{2}({ s}))ds.
\end{split}
\end{equation}
On the other hand, using the estimate \eqref{above} and Lemma \ref{lemma2.1}, we observe for all $x\in {\Omega}$ that
\begin{equation*}
\begin{split}
T_{j}(u_1, u_2)({ x}) &=\int_\Omega \int_\Omega G_{1,j}({ x},{ \tau})G_{2,j}({ \tau},{ s})  f_{j}({ s}, u_{1}({ s}),u_{2}({ s}))  d{ s}\, d{ \tau}\\
&\geq {{\delta_{1,j}\delta_{2,j}}\psi^2_{1,j}(x)}\int_\Omega \int_\Omega \sqrt{G_{1,j}({ \tau},{ \tau})}\psi^2_{2,j}({ \tau})\sqrt{G_{2,j}({ s},{ s})}  f_{j}({ s}, u_{1}({ s}),u_{2}({ s}))  d{ s}\, d{ \tau}\\
&\geq {{\delta_{1,j}\delta_{2,j}}\psi^2_{1,j}(x)}\int_\Omega  \sqrt{G_{1,j}({ \tau},{ \tau})}\psi^2_{2,j}({ \tau})d{\tau}\int_\Omega \sqrt{G_{2,j}({ s},{ s})}  f_{j}({ s}, u_{1}({ s}),u_{2}({ s}))  d{ s}\\
&={{\delta_{1,j}\delta_{2,j}}\psi^2_{1,j}(x)}{{C_{0,j}}}\int_\Omega \sqrt{G_{2,j}({ s},{ s})}  f_{j}({ s}, u_{1}({ s}),u_{2}({ s}))  d{ s}\\
&\geq \frac{{{\delta_{1,j}\delta_{2,j}}\psi^2_{1,j}(x)}{{C_{0,j}}}}{C_{1,j}C_{2,j}\sqrt{M_{1,j}}|\Omega|}\|T_{j}(u_1, u_2)\|,\qquad j=1,2.
\end{split}
\end{equation*} If $x\in\Omega_0$, we get that $
T_{j}(u_1, u_2)({ x})\geq \sigma_j \|T_{j}(u_1, u_2)\|
$,
which yields
$$
T_1(u_1,u_2)(x)+T_2(u_1,u_2)(x)\geq \sigma_1\|T_1(u_1,u_2)\|+\sigma_2\|T_2(u_1,u_2)\|\geq \sigma\|T(u_1,u_2)\|
$$
and hence
 $T(u_1, u_2)=(T_1(u_1, u_2),T_2(u_1, u_2)) \in \mathcal{P}$, that is, $T(\mathcal{P}) \subset \mathcal{P}$.

{In addition, note that $f_{1}, f_{2}$, and $G_{i, j}$ are continuous. Therefore, we can deduce that $T$ is completely continuous just applying Ascoli-Arzela Theorem.}
\end{proof}

Finally, let us denote
$$
\mathcal{P}_{r}:=\{(u_1, u_2) \in \mathcal{P}:\|(u_1, u_2)\|<r\}.
$$
Clearly, for each $r>0, \mathcal{P}_{r}$ is a relatively open and bounded set of $\mathcal{P}$.
Then, by the definition of cone $\mathcal{P}$ and the norm $\|(u_1, u_2)\|$, one can see that
$$
\partial \mathcal{P}_{r}:=\{(u_1, u_2) \in \mathcal{P}:\|(u_1, u_2)\|=r\},
\quad \overline{\mathcal{P}_{r}}:=\{(u_1, u_2) \in \mathcal{P}:\|(u_1, u_2)\| \leq r\}.
$$
Since $\mathcal{P}_r\neq \emptyset$ and $T:\overline{\mathcal{P}_{r}} \ \rightarrow \mathcal{P}$ is a completely continuous mapping, see Lemma\;\ref{lemma2.3}, we get that the fixed point index, $i\left(T, \mathcal{P} _r, \mathcal{P} \right)$ is defined if $T(u_1,u_2)\neq (u_1,u_2)$ for ever $(u_1,u_2)\in \mathcal{P}_r$. Moreover, if $i\left(T, \mathcal{P} _r, \mathcal{P} \right)\neq 0$, this actually implies that the mapping $T$ possesses a fixed point in $\mathcal{P} _r$.
Recall that the fixed point index $i$ is a counter of the number of zeros for a continuous differential operator. It might be obtained using the Leray-Schauder formula of the topological degree for compact perturbations of the identity in a Banach space and it is related to the topological degree of Brouwer. Through these abstract algebraic concepts one can obtain the number of solutions of a differential equation; see \cite{Amann}, \cite{GuoLak} and \cite{Lloyd} for  further details on fixed point theory an
extensive analysis of such concepts.

The following lemma states how to calculate the fixed point index of $T$ in $\mathcal{P}_{r}$, $i\left(T, \mathcal{P}_r, \mathcal{P}\right)$. We include it here omitting the proof, which can be checked in  \cite{GuoLak}.

\begin{lemma}\label{Li2.4}
Let $T: \mathcal{P} \rightarrow \mathcal{P}$ be a completely continuous mapping.
\begin{enumerate}
\item If $\eta T (u_1,u_2) \neq (u_1,u_2)$ for every $(u_1,u_2) \in \partial \mathcal{P} _r$ and $0<\eta\leq 1$, then $i\left(T, \mathcal{P} _r, \mathcal{P} \right)=1$.
\item If  $\eta T (u_1,u_2) \neq (u_1,u_2)$ for every $(u_1,u_2) \in \partial \mathcal{P} _r$, $\eta \geq 1$  and {$\inf _{(u_1,u_2) \in \partial \mathcal{P} _r}\|T(u_1,u_2)\|>0$} then $i\left(T, \mathcal{P}_r, \mathcal{P}\right)=0$.
\end{enumerate}
\end{lemma}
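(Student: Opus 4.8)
The plan is to read both assertions off the three structural axioms of the fixed point index on the cone $\mathcal{P}$: the \emph{normalisation} property (a constant map whose single value is $0\in\mathcal{P}_r$ has index $1$), the \emph{homotopy invariance} property (the index is unchanged along a homotopy of completely continuous self-maps of $\mathcal{P}$ that has no fixed point on $\partial\mathcal{P}_r$ at any parameter value), and the \emph{solution} property (a map with no fixed point in $\overline{\mathcal{P}_r}$ has index $0$). All three are applicable here because, by Lemma~\ref{lemma4.1}, $T$ is completely continuous and maps $\mathcal{P}$ into itself.

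For part (1) I would join $T$ to the zero map through $H(\eta,\cdot)=\eta T$, $\eta\in[0,1]$. Each $H(\eta,\cdot)$ is completely continuous and sends $\mathcal{P}$ into $\mathcal{P}$. The homotopy is admissible on $\partial\mathcal{P}_r$: for $\eta\in(0,1]$ the hypothesis $\eta T(u_1,u_2)\neq(u_1,u_2)$ is exactly the statement that there is no fixed point, while for $\eta=0$ one has $H(0,\cdot)\equiv 0\neq(u_1,u_2)$ because $\|(u_1,u_2)\|=r>0$ there. Homotopy invariance then gives $i(T,\mathcal{P}_r,\mathcal{P})=i(0,\mathcal{P}_r,\mathcal{P})$, and since $0\in\mathcal{P}_r$ the normalisation property yields the value $1$.

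For part (2) I would instead deform $T$ towards a map with no fixed point at all in $\overline{\mathcal{P}_r}$, using a shift in a fixed cone direction. Choosing $e\in\mathcal{P}$ with $\|e\|=1$, I would consider $H(s,\cdot)=T+s\,e$ for $s\in[0,S]$. Complete continuity bounds $\|T(u_1,u_2)\|\leq M$ on $\overline{\mathcal{P}_r}$, so a relation $(u_1,u_2)=T(u_1,u_2)+s\,e$ forces $s=\|(u_1,u_2)-T(u_1,u_2)\|\leq r+M$; taking $S>r+M$ makes $H(S,\cdot)$ fixed-point free on $\overline{\mathcal{P}_r}$, whence its index is $0$ by the solution property. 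Granting admissibility, homotopy invariance delivers $i(T,\mathcal{P}_r,\mathcal{P})=0$.

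The hard part will be this last admissibility check, namely that $T(u_1,u_2)+s\,e\neq(u_1,u_2)$ on $\partial\mathcal{P}_r$ for every $s\in[0,S]$, and it is exactly where both hypotheses of part (2) enter. The value $s=0$ is covered by the case $\eta=1$ of $\eta T(u_1,u_2)\neq(u_1,u_2)$, which excludes a genuine boundary fixed point. For $s>0$, a relation $(u_1,u_2)=T(u_1,u_2)+s\,e$ forces $T(u_1,u_2)=(u_1,u_2)-s\,e$ to stay in $\mathcal{P}$, and one must combine the order structure of the cone with the spectral exclusion $\eta T(u_1,u_2)\neq(u_1,u_2)$ for $\eta\geq 1$ and the uniform lower bound $\inf_{\partial\mathcal{P}_r}\|T(u_1,u_2)\|>0$ to reach a contradiction. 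The positivity of this infimum is indispensable: it rules out the degenerate, zero-like behaviour (for instance the null map) for which the index would instead equal $1$. This reduction to the shifted, fixed-point-free form is precisely the computation carried out in~\cite{GuoLak}, which is why the statement may be quoted there.
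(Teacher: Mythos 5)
Your part (1) is complete and correct, and it is the classical argument: the homotopy $H(\eta,\cdot)=\eta T$ is admissible on $\partial\mathcal{P}_r$ for $\eta\in(0,1]$ by hypothesis and at $\eta=0$ because $r>0$, so homotopy invariance plus normalisation give index $1$. (Note that the paper itself offers no proof of this lemma --- it is quoted from Guo--Lakshmikantham --- so the benchmark is the classical proof there.) Part (2), however, has a genuine gap, and it sits exactly where you write ``granting admissibility'': you never establish that $T(u_1,u_2)+s\,e\neq(u_1,u_2)$ on $\partial\mathcal{P}_r$ for $0<s<S$, and for an \emph{arbitrary} unit vector $e\in\mathcal{P}$ this is simply not a consequence of the two hypotheses. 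The conditions ``$\eta T(u_1,u_2)\neq(u_1,u_2)$ for $\eta\geq1$'' (equivalently $Tu\neq\mu u$ for $0<\mu\leq1$) and $\inf_{\partial\mathcal{P}_r}\|Tu\|=m>0$ say nothing about the direction $e$: already in $\mathbb{R}^2$ with the positive quadrant and the sup norm, the boundary point $u=(1,1)$ with $Tu=(1,\tfrac12)$ and $e=(0,1)$ satisfies $u=Tu+\tfrac12 e$, while $Tu$ is not a multiple $\mu u$ with $0<\mu\le 1$ and $\|Tu\|$ is bounded below; nothing prevents a map with this behaviour from satisfying both hypotheses of part (2) on the whole boundary. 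Even in the most favourable setting (a cone with monotone norm), the relation $u=Tu+se$ only tells you $u-Tu\in\mathcal{P}$, whence $r=\|u\|\geq\|Tu\|\geq m$ --- no contradiction, since nothing forces $m>r$. So the contradiction you hope to extract ``from the order structure, the spectral exclusion and the infimum'' does not exist at this stage, and the plan cannot be closed as stated.

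The missing idea is an amplification step that spends the $\eta\geq1$ hypothesis \emph{before} any shift or expansion argument. The homotopy $H(t,u)=(1+t)Tu$, $t\in[0,t_0]$, is admissible on $\partial\mathcal{P}_r$ for every $t_0$, because a boundary fixed point would read $\eta Tu=u$ with $\eta=1+t\geq1$, which is exactly what the hypothesis forbids; hence $i(T,\mathcal{P}_r,\mathcal{P})=i(\lambda T,\mathcal{P}_r,\mathcal{P})$ for every $\lambda\geq1$. Choosing $\lambda>r/m$ --- and this is precisely where the condition $\inf_{\partial\mathcal{P}_r}\|Tu\|>0$ is indispensable, not merely ``to rule out the null map'' as you suggest --- one obtains $\|\lambda Tu\|\geq\lambda m>r=\|u\|$ on $\partial\mathcal{P}_r$, and $\lambda Tu\neq u$ there for the same reason as above. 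To this amplified map the standard index-zero criteria of \cite{GuoLak} apply (the norm-expansion half of the cone expansion--compression index theorems, or the criterion that $u\neq Au+te$ on the boundary for all $t\geq0$ forces index $0$, which now becomes verifiable), and homotopy invariance transfers the value $0$ back to $T$. Your shift homotopy applied directly to $T$, with an arbitrarily chosen $e$, skips this amplification; that is the step you would need to add to make the argument sound.
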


\section{Existence of positive solutions}
\label{proof_theorem}

In this section we proof the main theorem of the paper. As mentioned, we will establish conditions on the functions $f_i$ and on the parameters of the system~\eqref{1.1_eq_principal} so that~\eqref{1.1_eq_principal} has positive solution $(u_1,u_2)$. Recall that the functions $f_1,f_2   \,:\, \Omega\times [0,\infty)\times [0,\infty) \rightarrow [0,\infty)$ are continuous.

More precisely,
we will assume that the parameters of the system verify
\begin{equation}
  \label{eq:condition.parameter}
   \beta_i<2\lambda_1,\quad  \beta_i^2\geq -4\alpha_i,\quad \frac{\alpha_i}{\lambda_k^2}+\frac{\beta_i}{\lambda_k}<1.
\end{equation}
Recall, see Section~\ref{sect:preliminaries_elements}, that the first two conditions are related to the existence and properties of the Green's functions and the third one to the existence of solutions.
As for the functions $f_i$, let us  introduce the following notation for convenience:
\begin{align*}
 {f}_{0}&=\liminf_{u+u_2\rightarrow 0^+}\min_{x\in\overline{\Omega} }F[x,u_1,u_2], &  {f}_{\infty}&=\liminf_{u_1+u_2\rightarrow +\infty}\min_{x\in\overline{\Omega}} F[x,u_1,u_2], \\
  {f}^{0}&=\limsup_{u_1+u_2\rightarrow 0^+}\max_{x\in\overline{\Omega}}F[x,u_1,u_2], &  {f}^{\infty}&=\limsup_{u_1+u_2\rightarrow +\infty}\max_{x\in\overline{\Omega}} F[x,u_1,u_2],
\end{align*}
where
$$
F[x,u_1,u_2]:=\frac{f_1(x,u_1,u_2)+f_2(x,u_1,u_2)}{L_1u_1+L_2u_2}
$$ and
$L_i=\lambda_1^2-\lambda_1\beta_i-\alpha_i$ for $i=1,2$. Observe that, due to the third condition in~\eqref{eq:condition.parameter}, $L_i>0$.

\begin{theorem}
\label{thm:main}
  If $f^0 < 1< f_\infty $, then the system \eqref{1.1_eq_principal}--\eqref{1.1_BC} has at least one positive solution.
\end{theorem}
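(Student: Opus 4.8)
The plan is to produce a nontrivial fixed point of the completely continuous cone map $T\colon\mathcal{P}\to\mathcal{P}$ from \eqref{3.8} by computing its fixed point index on two nested relatively open sets $\mathcal{P}_{r_1}\subset\mathcal{P}_{r_2}$ and exploiting the additivity of the index. I would show $i(T,\mathcal{P}_{r_1},\mathcal{P})=1$ for a sufficiently small $r_1$ (using $f^0<1$) and $i(T,\mathcal{P}_{r_2},\mathcal{P})=0$ for a sufficiently large $r_2$ (using $f_\infty>1$), so that
\[
 i\bigl(T,\mathcal{P}_{r_2}\setminus\overline{\mathcal{P}_{r_1}},\mathcal{P}\bigr)=i(T,\mathcal{P}_{r_2},\mathcal{P})-i(T,\mathcal{P}_{r_1},\mathcal{P})=0-1=-1\neq0 .
\]
The nonvanishing of the index on the annulus forces a fixed point $(u_1,u_2)$ with $r_1<\|(u_1,u_2)\|<r_2$; being a nonzero element of $\mathcal{P}$ with $u_j=T_j(u_1,u_2)\ge0$, it is the sought positive solution of \eqref{1.1_eq_principal}--\eqref{1.1_BC}. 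The engine for both index computations is to test the relation $\eta T(u_1,u_2)=(u_1,u_2)$ against the first Dirichlet eigenfunction $\phi_1$. Indeed, a candidate $u_j=\eta T_j(u_1,u_2)$ solves $(\Delta^2+\beta_j\Delta-\alpha_j)u_j=\eta f_j$ under the Navier conditions, and since $(\Delta^2+\beta_j\Delta-\alpha_j)\phi_1=L_j\phi_1$ with $L_j=\lambda_1^2-\lambda_1\beta_j-\alpha_j>0$ by \eqref{eq:condition.parameter}, self-adjointness gives $L_j\int_\Omega u_j\phi_1=\eta\int_\Omega f_j\phi_1$, and summing in $j$ yields the balance
\[
 \int_\Omega (L_1u_1+L_2u_2)\,\phi_1\,dx=\eta\int_\Omega (f_1+f_2)\,\phi_1\,dx .
\]

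For $i(T,\mathcal{P}_{r_1},\mathcal{P})=1$ I would use Lemma~\ref{Li2.4}(1). Choose $c\in(f^0,1)$ and, by $f^0<1$, a radius $r_1>0$ so small that $f_1+f_2\le c\,(L_1u_1+L_2u_2)$ whenever $0\le u_1+u_2\le r_1$. If $\eta T(u_1,u_2)=(u_1,u_2)$ for some $(u_1,u_2)\in\partial\mathcal{P}_{r_1}$ and $0<\eta\le1$, then $u_1(x)+u_2(x)\le\|(u_1,u_2)\|=r_1$ pointwise, so the sublinear bound applies on all of $\Omega$, and the balance together with $\eta\le1$ gives $\int_\Omega(L_1u_1+L_2u_2)\phi_1\le c\int_\Omega(L_1u_1+L_2u_2)\phi_1$. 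Since $\phi_1>0$ in $\Omega$, $L_j>0$ and $(u_1,u_2)\neq 0$, that integral is strictly positive, whence $1\le c<1$, a contradiction. Thus $\eta T(u_1,u_2)\neq(u_1,u_2)$ on $\partial\mathcal{P}_{r_1}$ for all $0<\eta\le1$, and the index equals $1$.

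For $i(T,\mathcal{P}_{r_2},\mathcal{P})=0$ I would use Lemma~\ref{Li2.4}(2). By $f_\infty>1$ pick $c'>1$ and $R>0$ with $f_1+f_2\ge c'(L_1u_1+L_2u_2)$ whenever $u_1+u_2\ge R$. The decisive observation is that any candidate $u_j=\eta T_j(u_1,u_2)$ is an image of $T$, hence by Lemma~\ref{lemma_bound_u} satisfies the interior bound $u_j(x)\ge\gamma_j\,\phi_1^2(x)\,\|u_j\|$ for \emph{every} $x\in\Omega$ (using that the normalized first eigenfunctions $\psi_{i,j}$ of $(-\Delta-\mu_{i,j})$ all coincide with $\phi_1$); summing gives $u_1+u_2\ge\underline{\gamma}\,\phi_1^2\,r_2$ on $\partial\mathcal{P}_{r_2}$, with $\underline{\gamma}=\min_j\gamma_j$. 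Therefore, on the super-level set $\Omega_t=\{\phi_1^2\ge t\}$ with $t=R/(\underline{\gamma}r_2)$ the superlinear bound holds everywhere. Feeding this into the balance and using $\eta\ge1$ gives, with $w:=L_1u_1+L_2u_2$, the inequality $\int_{\Omega\setminus\Omega_t}w\phi_1\ge(c'-1)\int_{\Omega_t}w\phi_1$; estimating $w\le\max_jL_j\,r_2$ on the left and $w\ge\min_jL_j\,\underline{\gamma}\,\phi_1^2\,r_2$ on the right, the factor $r_2$ cancels and leaves
\[
 \max_jL_j\int_{\Omega\setminus\Omega_t}\phi_1\,dx\ \ge\ (c'-1)\,\underline{\gamma}\,\min_jL_j\int_{\Omega_t}\phi_1^3\,dx .
\]
As $r_2\to\infty$ we have $t\to0$ and $\Omega_t\uparrow\Omega$, so the left-hand side tends to $0$ (a vanishing boundary strip where $\phi_1\to0$) while the right-hand side tends to the positive constant $(c'-1)\,\underline{\gamma}\,\min_jL_j\int_\Omega\phi_1^3$; this is impossible for $r_2$ large. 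Hence $\eta T(u_1,u_2)\neq(u_1,u_2)$ on $\partial\mathcal{P}_{r_2}$ for all $\eta\ge1$. Finally, $\inf_{\partial\mathcal{P}_{r_2}}\|T(u_1,u_2)\|>0$ because on $\Omega_0$ the cone estimate forces $u_1+u_2\ge\sigma r_2\ge R$, so $f_1+f_2$ is bounded below there and the lower estimate of Lemma~\ref{lemma4.1} makes $\|T(u_1,u_2)\|$ uniformly positive; thus the index is $0$.

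The main obstacle is the index-zero step. Using only the defining cone inequality $u_1+u_2\ge\sigma\|(u_1,u_2)\|$ on the fixed subdomain $\Omega_0$ does not suffice: the balance compares a global integral with one over $\Omega_0$, and the part of $u$ outside $\Omega_0$ is uncontrolled, so no contradiction emerges. What rescues the argument is the combination of the normalization of $F$ by $L_i$ (which fixes the threshold at exactly $1$, matching the principal eigenvalue $L_i$ of $\Delta^2+\beta_i\Delta-\alpha_i$) with the stronger pointwise lower bound $u_j(x)\ge\gamma_j\phi_1^2(x)\|u_j\|$ on all of $\Omega$ from Lemma~\ref{lemma_bound_u}, which lets the superlinear region exhaust $\Omega$ up to a boundary strip of negligible $\phi_1$-mass as $r_2\to\infty$. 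The remaining delicate points are to justify that Lemma~\ref{lemma_bound_u} applies to $u_j=\eta T_j(u_1,u_2)$ when $f_j$ is merely nonnegative, and to control the interchange of the limit $r_2\to\infty$ with the integral estimates above.
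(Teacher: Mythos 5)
Your proposal is correct, and its overall architecture coincides with the paper's: both halves of the index computation hinge on testing $\eta T(u_1,u_2)=(u_1,u_2)$ against $\phi_1$ via $(\Delta^2+\beta_j\Delta-\alpha_j)\phi_1=L_j\phi_1$, the index-$1$ step on the small ball is essentially identical, and the conclusion via additivity of the index on the annulus is the same. Where you genuinely diverge is the index-$0$ step. The paper extends the superlinear bound \emph{globally} by subtracting a constant: with $C=k(1+\varepsilon)(L_1+L_2)$ it uses $f_1+f_2\geq(1+\varepsilon)(L_1u_1+L_2u_2)-C$ for all $u_1+u_2\geq 0$ (inequality \eqref{eq:proof.bound.below}), so the $\phi_1$-test immediately yields $\varepsilon\int_\Omega(L_1u_1^0+L_2u_2^0)\phi_1\leq C\int_\Omega\phi_1$, and then the cone inequality on $\Omega_0$ (together with $u_j^0=\eta_0T_j\geq 0$ everywhere, so the contribution outside $\Omega_0$ can be dropped) converts this into an a priori bound $\|(u_1^0,u_2^0)\|\leq R_1$, contradicted by taking $R>R_1$. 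You instead keep the superlinear bound local but enlarge its region of validity using the pointwise estimate $u_j(x)\geq\gamma_j\phi_1^2(x)\|u_j\|$ on all of $\Omega$ (the content of Lemma~\ref{lemma_bound_u}, or equivalently the chain of Green's-function inequalities inside the proof of Lemma~\ref{lemma4.1}), letting the super-level sets $\{\phi_1^2\geq R/(\underline{\gamma}r_2)\}$ exhaust $\Omega$ as $r_2\to\infty$. The paper's affine-perturbation trick is shorter and avoids your limiting argument; your route avoids introducing $C$ altogether and makes quantitative use of a lemma the paper states but exploits only through the constant $\sigma$ in the cone, at the price of the extra super-level-set bookkeeping. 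Note, incidentally, that your remark that the cone inequality on $\Omega_0$ alone ``does not suffice'' is accurate only for the naive localization; the paper's $-C$ correction is precisely the alternative repair.

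The two ``delicate points'' you flag are both fine. First, Lemma~\ref{lemma_bound_u} does apply to $u_j=\eta T_j(u_1,u_2)$: the estimate is derived purely from Lemma~\ref{lemma2.1} and nonnegativity of the source, so $h=\eta f_j\geq 0$ (rather than strictly positive) suffices, and the bound is homogeneous in $\eta$; your identification $\psi_{i,j}=\phi_1$ is also correct, since $(-\Delta-\mu)$ and $-\Delta$ share eigenfunctions. Second, no genuine interchange of limits is needed: since $\phi_1>0$ in $\Omega$, the sets $\{\phi_1^2<t\}$ decrease to the empty set as $t\downarrow 0$, so $\int_{\Omega\setminus\Omega_t}\phi_1\to 0$ and $\int_{\Omega_t}\phi_1^3\to\int_\Omega\phi_1^3>0$ by monotone convergence, and one simply fixes $r_2$ large enough that your displayed inequality fails, a choice uniform over $\partial\mathcal{P}_{r_2}$ and $\eta\geq 1$.
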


\begin{proof}
First, since ${f}^{0}<1$, there exists $\varepsilon\in(0,1)$ and $R_0>0$, small, so that
\begin{equation}
  \label{eq:proof.bound.above}
  f_1(x,u_1,u_2)+f_2(x,u_1,u_2)\leq (1-\varepsilon)(L_1u_1+L_2u_2),
\end{equation}
for all $x\in\Omega$ and $u_1,u_2\geq 0$, $u_1+u_2\leq R_0$.
We claim now that
 for every $(u_1,u_2) \in \partial \mathcal{P} _{R_0}$ and $0<\eta\leq 1$
$$\eta T (u_1,u_2) \neq (u_1,u_2).$$
Then, following Lemma~\ref{Li2.4}, we conclude that
\begin{equation}
  \label{eq:index1} i\left(T, \mathcal{P} _{R_0}, \mathcal{P} \right)=1.
\end{equation}

To proof the claim, we argue by contradiction by assuming that there exist $(u_1^0,u_2^0)\in \partial \mathcal{P}_{R_0}$ and $0<\eta_0 \leq 1$ such that $\eta_0 T (u_1^0,u_2^0) = (u_1^0,u_2^0)$. Then, by definition of
$T$, we have that $(u_1^0,u_2^0)$
satisfies differential equations
\begin{align}\label{Li7}
\Delta^2 u^0_1 +\beta_1 \Delta u^0_1-\alpha_1 u^0_1=\eta_0 f_1(x,u^0_1,u^0_2),\quad \Delta^2 u^0_2 +\beta_2 \Delta u^0_2-\alpha_2 u^0_2=\eta_0 f_2(x,u^0_1,u^0_2)
\end{align}
and boundary condition \eqref{1.1_BC}. Adding these equations and using~\eqref{eq:proof.bound.above} we get
\begin{equation}
  \label{proof.verify.eqs}
\Delta^2 u^0_1 +\beta_1 \Delta u^0_1-\alpha_1 u^0_1+  \Delta^2 u^0_2 +\beta_2 \Delta u^0_2-\alpha_2 u^0_2\leq (1-\varepsilon)(L_1u_1+L_2u_2).
\end{equation}
 Multiplying this expression by $\phi_1(x)$ and integrating by parts we have
 $$
 \int_\Omega (L_1u_1+L_2u_2)\phi_1 \leq (1-\varepsilon) \int_\Omega (L_1u_u+L_2u_2)\phi_1,
 $$
which is a contradiction, since  $(L_1u_1+L_2u_2)\phi_1\geq 0$ in $\Omega$, and hence~\eqref{eq:index1} follows.

On the other hand, we have that due to $ f_\infty>1$, there exists $\varepsilon\in(0,1)$ and $k>0$, so that
\begin{equation}
  \label{eq:proof.bound.below2} f_1(x,u_1,u_2)+f_2(x,u_1,u_2)\geq (1+\varepsilon)(L_1u_1+L_2u_2),\end{equation}
for all $x\in\Omega$ and $u_1,u_2\geq 0$, $u_1+u_2\geq k$.
{Moreover if we take $C=k(1+\varepsilon)(L_1+L_2)$ then for all $x\in\Omega$ and $u_1+u_2\geq 0$
\begin{equation}
  \label{eq:proof.bound.below}
  f_1(x,u_1,u_2)+f_2(x,u_1,u_2)\geq (1+\varepsilon)(L_1u_1+L_2u_2)-C.\end{equation}
}
We want to show now that there exists an $R>R_0$, to be chosen later, so that
$$\inf_{(u_1,u_2)\in \mathcal{P}_R}\|T(u_1,u_2)\|>0,\quad \text{and}\quad \eta T (u_1,u_2) \neq (u_1,u_2)$$
for every $(u_1,u_2) \in \partial \mathcal{P} _R$ and $\eta\geq 1$. As before, this implies, following Lemma~\ref{Li2.4}, that
  \begin{equation}
  \label{eq:index2} i\left(T, \mathcal{P} _R, \mathcal{P} \right)=0.
\end{equation}

We argue again by contradiction. Let $(u_1^0,u_2^0)\in \partial \mathcal{P}_R$ and $\eta_0 \geq 1$ such that $\eta_0 T (u_1^0,u_2^0) = (u_1^0,u_2^0)$. Then, from~\eqref{Li7}  and following the same steps as in the first part of the proof, using~\eqref{eq:proof.bound.below}, we get
$$
 \int_\Omega (L_1u_1+L_2u_2)\phi_1 \geq (1+\varepsilon) \int_\Omega (L_1u_u+L_2u_2)\phi_1-C\int_\Omega \phi_1.
$$
Consequently, we obtain that
\begin{align}\label{Li12}
\varepsilon\int_\Omega (L_1u_1^0+L_2u_2^0) \phi_1 \leq {C\displaystyle\int_\Omega \phi_1 }.
\end{align}
On the other hand, since $(u_1^0,u_2^0)\in\mathcal{P}_R$, we have
 $$
\int_\Omega (L_1u_1^0+L_2u_2^0)\geq \min\{L_1,L_2\}\sigma \|(u_1^0,u_2^0)\|\int_\Omega \phi_1,
 $$
 so that we conclude that
 $$
 \|(u_1^0,u_2^0)\|\leq \frac{C}{\sigma\varepsilon\min\{L_1,L_2\}}:=R_1.
 $$
 If $R>R_1$ this last inequality provides a contradiction with the fact that $(u_1^0,u_2^0)\in \partial \mathcal{P}_R$.

Next, we show that $\inf_{(u_1,u_2)\in \mathcal{P}_R}\|T(u_1,u_2)\|>0$. To this aim, let
$R_2=k/\sigma$ and for $R>R_2$ take $(u_1,u_2) \in \partial \mathcal{P}_R$.
Then, by definition of the cone $\mathcal{P}$,
for $x \in {\Omega}_0$, we have that $(u_1+u_2)(x) \geq \sigma\|(u_1,u_2)\|>k$ . So that,  due to~\eqref{eq:proof.bound.below2} we find that
\begin{equation}
  \begin{aligned}
  \label{eq:bound.for.f}
f_1(x,u_1,u_2)+f_2(x,u_1,u_2)&\geq (1+\varepsilon)(L_1u_1+L_2u_2)\\ & \geq  (1+\varepsilon)\min\{L_1,L_2\}\sigma\|(u_1,u_2)\|\geq (1+\varepsilon)\min\{L_1,L_2\}k,
\end{aligned}
\end{equation}
for all $x\in\Omega_0$.
On the other hand,  let $x_0\in\Omega_0$ fixed, as in the proof of Lemma~\ref{lemma4.1}, using Lemma~\ref{lemma2.1}, we get
$$
\begin{aligned}
(T _1  (u_1,u_2)&  +T_2(u_1,u_2))(x_0)  \geq 
{{\delta_{1,1}\delta_{2,1}}\psi^2_{1,1}}{{C_{0,1}}}\int_\Omega \sqrt{G_{2,1}({ s},{ s})}  f_{1}({ s}, u_{1}({ s}),u_{2}({ s}))  d{ s}
\\ & +
{{\delta_{1,2}\delta_{2,2}}\psi^2_{1,2}}{{C_{0,2}}}\int_\Omega \sqrt{G_{2,2}({ s},{ s})}  f_{2}({ s}, u_{1}({ s}),u_{2}({ s}))  d{ s}\\
\geq &
\min_{j=1,2}\{\delta_{1,j}\delta_{2,j}m_{1,j}C_{0,j}\}\int_{\Omega_0} \left(\sqrt{G_{2,1}({ s},{ s})}  f_{1}({ s}, u_{1}({ s}),u_{2}({ s})) + \sqrt{G_{2,2}({ s},{ s}) } f_{2}({ s}, u_{1}({ s}),u_{2}({ s})) \right) d{ s}\\
\geq&\min_{j=1,2}\{\delta_{1,j}\delta_{2,j}m_{1,j}m_{2,j}C_{0,j}\} \int_{\Omega_0} f_{1}({ s}, u_{1}({ s}),u_{2}({ s}))+f_{2}({ s}, u_{1}({ s}),u_{2}({ s}))  d{ s},
\end{aligned}
$$
where $m_{2,j}=\min_{\Omega_0}G_{2,j}(s,s)$.
From~\eqref{eq:bound.for.f} we have that
\begin{equation}
  \label{eq:bound.inf.T}
\int_{\Omega_0} f_{1}({ s}, u_{1}({ s}),u_{2}({ s}))+f_{2}({ s}, u_{1}({ s}),u_{2}({ s}))  d{ s} \geq (1+\varepsilon)\min\{L_1,L_2\}k |\Omega_0|,
\end{equation}
so that we conclude that
$$
(T _1(u_1,u_2)+T_2(u_1,u_2))(x_0)\geq (1+\varepsilon)\min_{j=1,2}\{\delta_{1,j}\delta_{2,j}m_{1,j}m_{2,j}C_{0,j}\}\min\{L_1,L_2\}k |\Omega_0|.
$$
Therefore
{$$\begin{aligned}
\|T(u_1,u_2)\| \geq  (T_1 (u_1,u_2)+T_2(u_1,u_2)) (x_0) \geq (1+\varepsilon)\min_{j=1,2}\{\delta_{1,j}\delta_{2,j}m_{1,j}m_{2,j}C_{0,j}\}\min\{L_1,L_2\}k |\Omega_0|.
\end{aligned}$$}
Taking the infimum on both sides over $(u_1,u_2)\in \partial \mathcal{P}_R$ we obtain that $\inf_{(u_1,u_2)\in \partial \mathcal{P}_R}\|T(u_1,u_2)\|>0$.

Summing up, for any $R_0>0$ small and $R>\max \left\{R_0,R_1, R_2\right\}$ we conclude the proof by using the additivity of fixed point index for~\eqref{eq:index1} and~\eqref{eq:index2}:
$$
i\left(T, \mathcal{P}_R \backslash \overline{P}_{R_0}, \mathcal{P}\right)=i\left(T, \mathcal{P}_R, \mathcal{P}\right)-i\left(T, \mathcal{P}_{R_0}, P\right)=-1 .
$$
{Hence, since $R_0$ is as small as desired, $T$ has a fixed point in $\mathcal{P}_R \backslash \{0,0\}$, which is a positive solution of the system \eqref{1.1_eq_principal}--\eqref{1.1_BC}}.
\end{proof}


\begin{theorem}
  If $ f^\infty <1<f_0$, then the system \eqref{1.1_eq_principal}--\eqref{1.1_BC} has at least one positive solution.
\end{theorem}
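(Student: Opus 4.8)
The plan is to follow the architecture of the proof of Theorem~\ref{thm:main} verbatim, simply interchanging the roles of the origin and of infinity. Concretely, I would show that the fixed point index of $T$ equals $0$ on a small ball $\mathcal{P}_r$ and equals $1$ on a large ball $\mathcal{P}_R$; additivity then yields index $+1$ on the annulus $\mathcal{P}_R\setminus\overline{\mathcal{P}_r}$, which forces a nontrivial fixed point of $T$ in $\mathcal{P}$, i.e.\ a positive solution of \eqref{1.1_eq_principal}--\eqref{1.1_BC}. This is the exact mirror image of Theorem~\ref{thm:main}, where the small ball carried index $1$, the large ball index $0$, and the annulus index $-1$.

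First I would exploit $f_0>1$ on a small ball $\mathcal{P}_r$, aiming for part~(2) of Lemma~\ref{Li2.4}. Since $f_0>1$, there are $\varepsilon\in(0,1)$ and $r>0$ small with
$$f_1(x,u_1,u_2)+f_2(x,u_1,u_2)\geq (1+\varepsilon)(L_1u_1+L_2u_2)$$
for all $x\in\Omega$ and $u_1,u_2\geq 0$ with $u_1+u_2\leq r$; on $\partial\mathcal{P}_r$ this applies pointwise because $u_1(x)+u_2(x)\leq\|(u_1,u_2)\|=r$. For the non-fixed-point condition I would argue as in Theorem~\ref{thm:main}: if $\eta_0T(u_1^0,u_2^0)=(u_1^0,u_2^0)$ with $\eta_0\geq 1$ and $(u_1^0,u_2^0)\in\partial\mathcal{P}_r$, then multiplying the summed equations \eqref{Li7} by $\phi_1$ and integrating by parts gives
$$\int_\Omega(L_1u_1^0+L_2u_2^0)\phi_1=\eta_0\int_\Omega(f_1+f_2)\phi_1\geq(1+\varepsilon)\int_\Omega(L_1u_1^0+L_2u_2^0)\phi_1,$$
forcing $\int_\Omega(L_1u_1^0+L_2u_2^0)\phi_1\leq0$ and hence $(u_1^0,u_2^0)=(0,0)$, contradicting $\|(u_1^0,u_2^0)\|=r$. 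For the remaining requirement $\inf_{\partial\mathcal{P}_r}\|T\|>0$ I would use the cone property $u_1+u_2\geq\sigma r$ on $\Omega_0$, which combined with the lower bound gives $f_1+f_2\geq(1+\varepsilon)\min\{L_1,L_2\}\sigma r>0$ on $\Omega_0$; feeding this into the Green's function lower estimates of Lemma~\ref{lemma2.1}, exactly as in the proof of Lemma~\ref{lemma4.1}, produces a positive lower bound for $\|T(u_1,u_2)\|$ independent of $(u_1,u_2)$. Thus $i(T,\mathcal{P}_r,\mathcal{P})=0$.

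Next I would use $f^\infty<1$ on a large ball $\mathcal{P}_R$, now aiming for part~(1) of Lemma~\ref{Li2.4}. Picking $\varepsilon\in(0,1)$ and $K>0$ with $f_1+f_2\leq(1-\varepsilon)(L_1u_1+L_2u_2)$ for $u_1+u_2\geq K$, and absorbing the continuous values on the compact region $\{u_1+u_2\leq K\}$ into an additive constant $C'$ (as in \eqref{eq:proof.bound.below}), I obtain $f_1+f_2\leq(1-\varepsilon)(L_1u_1+L_2u_2)+C'$ for all $u_1,u_2\geq0$. If $\eta_0T(u_1^0,u_2^0)=(u_1^0,u_2^0)$ with $0<\eta_0\leq1$ and $(u_1^0,u_2^0)\in\partial\mathcal{P}_R$, the same integration against $\phi_1$ together with $\eta_0\leq1$ gives
$$\varepsilon\int_\Omega(L_1u_1^0+L_2u_2^0)\phi_1\leq C'\int_\Omega\phi_1.$$
Bounding the left-hand side below by a positive multiple of $\|(u_1^0,u_2^0)\|$ via the cone estimate on $\Omega_0$ yields $\|(u_1^0,u_2^0)\|\leq R_1$ with $R_1$ independent of $R$, a contradiction once $R>R_1$. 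Hence $i(T,\mathcal{P}_R,\mathcal{P})=1$, and for $R>\max\{r,R_1\}$ additivity gives
$$i(T,\mathcal{P}_R\setminus\overline{\mathcal{P}_r},\mathcal{P})=i(T,\mathcal{P}_R,\mathcal{P})-i(T,\mathcal{P}_r,\mathcal{P})=1-0=1\neq0,$$
so $T$ has a fixed point in $\mathcal{P}_R\setminus\overline{\mathcal{P}_r}$, which is nontrivial and therefore positive. The step I expect to require the most care is the verification of $\inf_{\partial\mathcal{P}_r}\|T\|>0$ near the origin: one must be careful to use the lower bound only on $\Omega_0$, where the cone guarantees $u_1+u_2\geq\sigma r$, so that $f_1+f_2$ stays bounded below by a fixed positive constant even though $u_1+u_2$ is globally small.
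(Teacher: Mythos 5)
Your proposal is correct and follows essentially the same route as the paper's proof: index $0$ on the small ball $\mathcal{P}_{R_0}$ via $f_0>1$ (using the $\inf\|T\|>0$ bound from the argument in \eqref{eq:bound.inf.T} and the $\phi_1$-integration contradiction), index $1$ on the large ball via $f^\infty<1$ with the additive constant $C$ and the bound $\|(u_1^0,u_2^0)\|\leq R_1$, then additivity giving index $1$ on the annulus. Your treatment of the small-ball contradiction is in fact slightly more careful than the paper's (you deduce $\int_\Omega(L_1u_1^0+L_2u_2^0)\phi_1\leq 0$ and hence $(u_1^0,u_2^0)=(0,0)$, whereas the paper jumps to ``$1\geq 1+\epsilon$'', which tacitly assumes that integral is positive), but this is a refinement of the same argument, not a different one.
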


The proof follows the same arguments as the previous one. We omit the details and sketch the main differences.

\begin{proof}
  Since $1<f_0$, there exists $\varepsilon\in(0,1)$ and $R_0>0$, small, so that
$$
 f_1(x,u_1,u_2)+f_2(x,u_1,u_2)\geq (1+\varepsilon)(L_1u_1+L_2u_2),
$$
for all $x\in\Omega$ and $u_1,u_2\geq 0$, $u_1+u_2\leq R_0$.
Then for every $(u_1,u_2) \in \partial \mathcal{P}_{R_0}$, through the argument used in \eqref{eq:bound.inf.T}, we have
$$
\|T (u_1,u_2)\| \geq (1+\varepsilon)\min_{j=1,2}\{\delta_{1,j}\delta_{2,j}m_{1,j}m_{2,j}C_{0,j}\}\min\{L_1,L_2\}R_0 |\Omega_0|.
$$
Hence $\inf _{(u_1,u_2) \in \partial \mathcal{P}_{R_0}}\|T (u_1,u_2)|>0$.

To show that $\eta T (u_1,u_2) \neq (u_1,u_2)$ for any $(u_1,u_2) \in \partial \mathcal{P}_{R_0}$ and $\eta \geq 1$ we argue by contradiction and get that, if there exist $(u_1^0,u_2^0) \in \partial \mathcal{P}_{R_0}$ and $\eta_0 \geq 1$ such that $\eta_0 T (u_1^0,u_2^0)=(u_1^0,u_2^0)$, then
$$
 \int_\Omega (L_1u_1+L_2u_2)\phi_1 \geq (1+\varepsilon) \int_\Omega (L_1u_u+L_2u_2)\phi_1,
$$
which implies $1\geq (1+\epsilon)$. Hence, we conclude that $i\left(T, \mathcal{P}_{R_0}, \mathcal{P}\right)=0$.

On the other hand, $f^\infty<1$ implies that there exist there exists $\varepsilon\in(0,1)$ and $k>0$, so
$$
   f_1(x,u_1,u_2)+f_2(x,u_1,u_2)\leq (1-\varepsilon)(L_1u_1+L_2u_2),$$
for all $x\in\Omega$ and $u_1,u_2\geq 0$, $u_1+u_2\geq k$. Moreover, we can find $C>0$ so that, for all $x\in\Omega$ and $u_1+u_2\geq 0$
$$
  f_1(x,u_1,u_2)+f_2(x,u_1,u_2)\leq (1-\varepsilon)(L_1u_1+L_2u_2)+C.$$

We argue as in the proof of Theorem~\ref{thm:main} assuming that for $R>R_0$ there exist {$(u_1^0,u_2^0 )\in \partial \mathcal{P}_R$} and $0<\eta_0 \leq 1$ such that $\eta_0 T(u_1^0,u_2^0)=(u_1^0,u_2^0)$. We get that
 $$
 \|(u_1^0,u_2^0)\|\leq \frac{C}{\sigma\varepsilon\min\{L_1,L_2\}}:=R_1,
 $$
which is a contradiction if  $R>R_1$. Hence, if  $R>\max \{{R}_0,{R}_1\}$ we get $i\left(T, \mathcal{P}_R, \mathcal{P}\right)=1$.

Finally it follows that
$i\left(T, \mathcal{P}_R \backslash \overline{\mathcal{P}}_{R_0}, \mathcal{P}\right)=i\left(T, \mathcal{P}_R, \mathcal{P}\right)-i\left(T, \mathcal{P}_{R_0}, \mathcal{P}\right)=1$.
Consequently, $T$ has a fixed point in $\mathcal{P}_R \backslash \overline{\mathcal{P}}_{R_0}$, which is a {positive} solution of \eqref{1.1_eq_principal}--\eqref{1.1_BC}.
\end{proof}




\begin{thebibliography}{99}



\bibitem{Amann} H. Amann, \emph{Ordinary Differential Equations. An Introduction to Nonlinear Analysis}, Walter de Gruyter, Berlin, 1990.

\bibitem{Banuelos} R. Banuelos, \textit{Sharp
estimates for Dirichlet eigenfunctions in simply connected domains}, J. Differential Equations, 125
(1996), pp. 282–298.

\bibitem{DelPino_Manas} M. A. Del Pino and R. F. Man\'asevich, {\it Existence for a Fourth-Order Boundary Value Problem under a Two-Parameter
Nonresonance Condition}, Proceedings of the American Mathematical Society, Vol. {\textbf 112}, No. 1 (1991), 81--86.


\bibitem{Duffy} D. J. Duffy, {\it Green's functions with applications}, CRC Press Taylor \& Francis Group (2015).

\bibitem{Gaz2} F. Gazzola, H.-Ch. Grunau, G. Sweers, \textit{Polyharmonic boundary value problems}, LNM 1991, Springer (2010).

\bibitem{GuoLak} D. Guo and V. Laksmikantham, {\it Nonlinear Problems in Abstract Cones}, Academic Press, 1988.

\bibitem{Gupta1988} C. P. Gupta, {\it Existence and uniqueness results for the bending of an elastic beam equation at resonance},
J. Math. Anal. Appl. { 135} (1988) 208-–225


\bibitem{Gupta1998} C. P. Gupta, {\it Existence and uniqueness theorems for a bending of an elastic beam equation}, Appl. Anal. { 26}
(1988) 289--304.

\bibitem{Kreith} K. Kreith, {\it Criteria for positive Green's functions}.
Illinois J. Math. 12 (1968), 475--478.


\bibitem{Li} Y. Li, {\it Positive solutions of fourth-order boundary value problems with two parameters}, Journal of Mathematical Analysis and Applications, {\textbf 281}, (2003), 477--484.

\bibitem{Lloyd} N. G. Lloyd, \emph{Degree Theory}, Cambridge Tracts in Mathematics, Cambridge University Press, Cambridge,1978.

\bibitem{PelTro} L. A. Peletier and W. C. Troy, {\it Spatial Patterns. Higher Order Models in Physics and Mechanics}, Springer, 2001.

\bibitem{handbook} A. Polyanin and V. Nazaikinskii, {\it Handbook of Linear Partial Differential Equations for Engineers and Scientists}, Second Edition, 2016.

\bibitem{Somm}  A. Sommerfeld, {\it Die Greensche Funktion der Schwingungsgleichung.} Jahresbericht
Deutsch. Math.-Verein., {\textbf 21}, (1912), 309--353.

\bibitem{Wang-Yang} Q. Wang and L. Yang {\it Positive solutions for a nonlinear system of fourth-order ordinary differential equations}, Electronic Journal of Differential Equations, Vol. {\textbf 2020}, no. 45 (2020), 1--15.

\end{thebibliography}
\end{document}